%%!\input{graphicx.inc}  %poskrbi za izbiro slike eps ali pdf (glede na prevajanje LaTeX ali PDF LaTeX)
%\bibliographystyle{unsrt}
%\usepackage{enumitem}
%\usepackage{hyperref}
%za dvojni razmik med vrsticami
%Page setup
%TO \setlength{\textheight}{9.0in}%
%TO \newcommand{\pl}{{\sc plus}}
%TO \newcommand{\eq}{{\sc equal}}
%TO \newcommand{\mi}{{\sc minus}}
%\setlength{\marginparsep}{3 mm}%
%\setlength{\marginparwidth}{1.5 cm}%
%\raggedbottom
% Paragraf settings
%\setlength{\parindent}{0 mm}%
%\setlength{\parskip}{1.5ex plus 0.5ex minus 0.5ex}%
%\renewcommand{\baselinestretch}{1.25}%
%TO \setlength{\mathindent}{1 cm}%
%\newcommand{\nnsection}[1]{\section*{#1}\addcontentsline{toc}{section}{\protect\numberline{}#1}}
%\newcommand{\nnsubsection}[1]{\subsection*{#1}\addcontentsline{toc}{subsection}{\protect\numberline{}#1}}
%\newcommand{\symbolschapter}[1]{%
%  \chapter*{#1}
%  \markboth{#1}{#1}%
%  \addcontentsline{toc}{chapter}{#1}
%}
%\newcommand{\POZOR}[1]{\marginpar{\tiny\bf #1}}
%ce pise na koncu Chapter, bo potem vse ?tevil?eno 1.15 in podobno%
%za kvadratek na koncu proofa%
%%%%%%%%%%%%%\input{tcilatex}
%\input{tcilatex}

\documentclass[11pt]{article}
%%%%%%%%%%%%%%%%%%%%%%%%%%%%%%%%%%%%%%%%%%%%%%%%%%%%%%%%%%%%%%%%%%%%%%%%%%%%%%%%%%%%%%%%%%%%%%%%%%%%%%%%%%%%%%%%%%%%%%%%%%%%%%%%%%%%%%%%%%%%%%%%%%%%%%%%%%%%%%%%%%%%%%%%%%%%%%%%%%%%%%%%%%%%%%%%%%%%%%%%%%%%%%%%%%%%%%%%%%%%%%%%%%%%%%%%%%%%%%%%%%%%%%%%%%%%
\usepackage{amssymb,amsfonts,amsmath,amsthm, cite}
\usepackage{enumerate}
\usepackage[english]{babel}
\usepackage[cp1250]{inputenc}
\usepackage{algorithm2e}
\usepackage{longtable}
\usepackage{graphicx}
\usepackage{index}
\usepackage{fancyhdr}
\usepackage{lhelp}
\usepackage{optparams}
\usepackage{psfrag}
\usepackage{forloop}
\usepackage{setspace}
\usepackage{tikz}
\usepackage{subcaption}
\usepackage{pgffor}
\usepackage{xifthen}

\setcounter{MaxMatrixCols}{10}
%TCIDATA{OutputFilter=Latex.dll}
%TCIDATA{Version=5.50.0.2960}
%TCIDATA{<META NAME="SaveForMode" CONTENT="1">}
%TCIDATA{BibliographyScheme=Manual}
%TCIDATA{LastRevised=Wednesday, October 23, 2019 22:01:39}
%TCIDATA{<META NAME="GraphicsSave" CONTENT="32">}

\definecolor{lgray}{gray}{0.75}
\usetikzlibrary{decorations.pathreplacing,automata,calc,positioning}
\textwidth 14.5cm
\textheight 21cm 
\oddsidemargin 0.4cm
\evensidemargin 0.4cm
\voffset -2cm

\renewcommand{\qed}{\hfill $\square$ \bigskip}

\newcommand{\mptt}[1]{}

\newtheorem{theorem}{Theorem}
\newtheorem{corollary}[theorem]{Corollary}
\newtheorem{lemma}[theorem]{Lemma}

\newtheorem{proposition}[theorem]{Proposition}

\begin{document}

\title{\textbf{Packing chromatic numbers of finite super subdivisions of
graphs}}
\author{Rachid Lemdani $^{a,b}$ \and Moncef Abbas $^{a}$ \and Jasmina Ferme $%
^{c,d,e}$ }
\date{}
\maketitle

\begin{abstract}
The \textit{packing chromatic number} of a graph $G$, denoted by $%
\chi_\rho(G)$, is the smallest integer $k$ such that the vertex set of $G$
can be partitioned into sets $V_i$, $i\in \{1,\ldots,k\}$, where each $V_i$
is an $i$-packing. In this paper, we present some general properties of
packing chromatic numbers of \textit{finite super subdivisions} of graphs.
We determine the packing chromatic numbers of the finite super subdivisions
of complete graphs, cycles and \textit{neighborhood corona graphs} of a
cycle and a path respectively of a complete graph and a path. 
%At the end of this paper, we present the exact values of the packing chromatic numbers of some corona and neighborhood corona graphs.

\smallskip \noindent \textbf{Keywords}: Packing chromatic number, packing
coloring, neighborhood corona, finite super subdivision.
\end{abstract}

\begin{center}
$^{a}$ \medskip Laboratoire AMCD\&RO, Université des Sciences et de la
Technologie, Houari Boumediene USTHB, BP32, 16111, Bab Ezzouar, Alger,
Algeria\ \\[0pt]
\medskip

$^{b}$ Université de Médéa, Faculté des Sciences, Département de Mathé%
matiques et Informatique, 26000, Médéa, Algeria \medskip\ \\[0pt]
\medskip

$^c$ Faculty of Education, University of Maribor, Slovenia\\[0pt]
\medskip

$^{d}$ Faculty of Natural Sciences and Mathematics, University of Maribor,
Slovenia\\[0pt]
\medskip

$^e$ Institute of Mathematics, Physics and Mechanics, Ljubljana, Slovenia\\[%
0pt]
\end{center}

\medskip\noindent \textbf{AMS Subj.\ Class: 05C15, 05C70, 05C12.}

%%%%%%%%%%%%%%%%%%%%%%%%%%%%%%%%%%%%%%%%%%%%%%%%%%%%%%%%%%%%%%%%
%%%%%%%%%%%%%%%%%%%%%%%%%%%%%%%%%%%%%%%%%%%%%%%%%%%%%%%%%%%%%%%%

\section{Introduction}

%%%%%%%%%%%%%%%%%%%%%%%%%%%%%%%%%%%%%%%%%%%%%%%%%%%%%%%%%

The concept of the packing chromatic number was introduced in 2008 by
Goddard et al.~\cite{goddard-2008}. First, it was presented under the name
broadcast chromatic number, and the current name was given in~\cite{bkr-2007}%
. The concept arose from the area of frequency assignment in wireless
networks \cite{FG, WR} and also has several additional applications, such as
in resource replacement and biological diversity \cite{bkr-2007}.

The packing chromatic number has been investigated in a number of papers,
for example, there exist more than 10 papers, which were published only in
the last two years (see \cite{balogh-2018, balogh-2019, bf-2018a, gt-2019,
bf-2018b, bkrw-2018, klmp-2018, kr-2019, kv-2018,kv-2019, ls-2018}). This
confirms a wide interest given to this concept. One of the main areas of
investigation has been to determine the packing chromatic numbers of
infinite graphs such as infinite grids, lattices, distance graphs, etc.~\cite%
{bar-2017, bkr-2007, ekstein-2014, finbow-2010, korze-2014, fiala-2009}. For
instance, in the last paper in a series Barnaby et al.~\cite{bar-2017} prove
that the packing chromatic number of the infinite square lattice is between
13 and 15. A lot of attention has been also given to the question of
boundedness of the packing chromatic numbers in the class of cubic graphs.
The question was answered in the negative by Balogh, Kostochka and Liu~\cite%
{balogh-2018} and there is also known an explicit construction an infinite
family of subcubic graphs with unbounded packing chromatic number (see~\cite%
{bf-2018b}). Note that the problem of determining the packing chromatic
number is computationally (very) hard \cite{FG} as its decision version is
NP-complete even when restricted to trees (see also a more recent
investigation~\cite{klmp-2018}).

It is known that the packing chromatic number satisfies the hereditary
property in the sense that a graph cannot have smaller packing chromatic
number that its subgraphs. The behaviour of the invariant under some local
operations, as edge-contraction, vertex-deletion, edge-deletion, and edge
subdivision was investigated in~\cite{bkrw-2017}, while the packing
chromatic number of subdivision of a given graph (a graph obtained from a
given graph $G$ by subdividing every edge of $G$, denoted by $S(G)$) was
considered, for example in \cite{bkr-2007}. In particular, there was proven
that for any connected graph $G$ with at least three vertices, we have: $%
\omega (G)+1\leq \chi _{\rho }(S(G))\leq \chi _{\rho }(G)+1$. Beside the
mentioned graphs operations, there is known also a (finite) super
subdivision of a graph (see e.g. \cite{kaladevi, nagarajan, mahe, WR2}), but
there is known only a little about the packing chromatic numbers of super
subdivision graphs. Actually, we have found only one paper considering the
packing chromatic number of such graphs (written by William and Roy \cite%
{WR2}). Moreover, the authors of the mentioned paper determined only the
packing chromatic number of finite super subdivision of star graphs, and
hence, we consider this topic.

Our paper is organized as follows. In the next section, we establish the
notation and define the concepts used throughout the paper. Next, we give a
lower and an upper bound for $\chi _{\rho }(FSSD_{m}(G))$, where $G$ is an
arbitrary connected graph with at least three vertices and $m$ is any
positive integer. Namely, we prove that $\omega (G)+1\leq \chi _{\rho
}(FSSD_{m}(G))\leq \chi _{\rho }(G)+1$, which generalizes the result for
subdivision of graph from \cite{bkr-2007}. Then, we prove that the packing
chromatic numbers of graphs $FSSD_{m}(G)$, $FSSD_{m+1}(G)$, $FSSD_{m+2}(G)$,
\ldots ~ are equal, when $m$ is large enough, i.e. it is greater than $\frac{%
\chi _{\rho }(G)}{\delta (G)}$. In this section we determine also the
packing chromatic numbers of super subdivision graphs $G$ in the case when $%
G $ is a bipartite graph, a complete graph or a cycle. In the next section
we consider finite super subdivision graphs of neighborhood corona graphs.
We provide the exact values for $\chi _{\rho }(FSSD_{m}(K_{n}\star P_{p}))$,
when $m$ is a positive arbitrary integer, and the upper bound for $\chi
_{\rho }(FSSD_{m}(C_{n}\star P_{p}))$. At the end, we provide some remarks
and open questions.

%The paper we continue by adding the values of the packing chromatic numbers of some corona and neighborhood corona graph, and end it with some remarks and open questions.

%%%%%%%%%%%%%%%%%%%%%%%%%%%%%%%%%%%%%%%%%%%%%%%%%%%%%%%%%%%%%%%%%%%%%%%%%%%%%%%%%%%%%%%%%%%%%%%%%%%%%%%%%%%%%%%%%%%%%%%%%%%%%%%%%%%%%%%%%%%%%%%%%%%%%%%%%%%%%%%%%%%%%%%%%%%%%%%%%%%%%%%%%%%%%%%%%%%%%%%%%%%%%%%%%%%%%%%%%%%%%%%%%%%%%%%%%%%%%%%%%%%%%%%%%%%%%%%%%%%%%%%%%%%%%%%%%%%%%%%%%%%%%%%%%%%%%%%%%%%%%%%%%%%%%%%%%%%%%%%%%%%%%%%%

\section{Notations and preliminaries}

In this paper, we consider only finite, simple graphs. For a given graph $G$%
, the vertex set of $G$ is denoted by $V(G)$ and the edge set by $E(G)$. The 
\emph{(open) neighborhood} of a vertex $u \in V(G)$ is the set of all
vertices adjacent to $u$: $N_{G}(u)=\{v\in V(G)|uv\in E(G) \}$ (we often
drop the subscript if the graph $G$ is clear from context). The \emph{degree}
of $u$, denoted by $deg_G(u)$, is $|N_G(u)|$.

The distance between two vertices $u, v \in V(G)$, denoted by $d_G(u, v)$
(or $d(u,v)$ in the case when a graph $G$ is clear from context), is the
length of a shortest $u, v$-path. The maximum of $\{d_G(x, y) : x, y \in
V(G)\}$ is called the \emph{diameter} of $G$ and is denoted by diam($G$).

Given a positive integer $i$, an \emph{$i$-packing} in $G$ is a subset $W$
of the vertex set of $G$ with the property that the distance between any two
distinct vertices from this set is greater than $i$. Note that this concept
generalizes the notion of an independent set, which is equivalent to a
1-packing. The smallest integer $k$ such that the vertex set of $G$ can be
partitioned into sets $V_1$, $V_2$, \ldots, $V_k$, where $V_i$ is an $i$%
-packing for each $i \in \{1, 2, \ldots, k \}$, is called the \emph{packing
chromatic number} of a graph $G$. We denote this number by $\chi_\rho(G)$.
The corresponding mapping $c : V(G) \rightarrow [k]$, satisfying the
property that $c(u) = c(v) = i$ implies $d_G(u,v) > i$, is called a \emph{$k$%
-packing coloring}. In the case when $k = \chi_\rho(G)$, we say that $k$%
-packing coloring is optimal.

First, recall two well known results of packing chromatic number, which will
be used several times in the sequel of this paper. While the first
proposition states that the packing chromatic number satisfies the
hereditary property, the second provides the values of packing chromatic
numbers for cycles.

\begin{proposition}
\cite{goddard-2008} For any subgraph $H$ of a given graph $G$, 
\begin{equation*}
\chi _{\rho }(H)\leq \chi _{\rho }(G).
\end{equation*}%
\label{subgraph}
\end{proposition}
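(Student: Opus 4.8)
The plan is to start from an optimal packing coloring of the larger graph and simply restrict it. Let $k=\chi_\rho(G)$ and let $c:V(G)\to\{1,\ldots,k\}$ be an optimal $k$-packing coloring of $G$. Since $H$ is a subgraph of $G$ we have $V(H)\subseteq V(G)$ and $E(H)\subseteq E(G)$, so the restriction $c'=c|_{V(H)}:V(H)\to\{1,\ldots,k\}$ is well defined, and the whole task reduces to checking that $c'$ is again a packing coloring of $H$.

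The single substantive step is the following distance comparison: for all $u,v\in V(H)$, $d_H(u,v)\ge d_G(u,v)$. Indeed, any $u,v$-path in $H$ uses only edges of $H$, hence only edges of $G$, so it is also a $u,v$-path in $G$; taking the shortest such path in $H$ gives a walk in $G$ of the same length, whence $d_G(u,v)\le d_H(u,v)$ (with the usual convention $d_H(u,v)=\infty$ when $u$ and $v$ lie in different components of $H$, in which case the inequality is trivial). Now suppose $u,v\in V(H)$ are distinct with $c'(u)=c'(v)=i$. Then $c(u)=c(v)=i$, so because $c$ is a packing coloring of $G$ we get $d_G(u,v)>i$, and therefore $d_H(u,v)\ge d_G(u,v)>i$. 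This shows that the color class $c'^{-1}(i)$ is an $i$-packing in $H$ for every $i$, i.e. $c'$ is a $k$-packing coloring of $H$.

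Consequently $\chi_\rho(H)\le k=\chi_\rho(G)$, as claimed. There is really no serious obstacle here: the only point that needs a moment's care is the distance monotonicity $d_H(u,v)\ge d_G(u,v)$ (equivalently, that deleting vertices and edges cannot shorten any path), together with handling the possibility that $H$ is disconnected, which the $\infty$ convention absorbs.
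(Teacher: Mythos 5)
Your proof is correct: restricting an optimal packing coloring of $G$ to $V(H)$ and using the distance monotonicity $d_H(u,v)\geq d_G(u,v)$ is exactly the standard argument for this hereditary property. The paper itself states this proposition only as a citation to Goddard et al.\ and gives no proof, so there is nothing to compare against; your argument, including the remark about possibly infinite distances in a disconnected $H$, is complete and matches the standard proof of the cited result.
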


\begin{proposition}
\cite{goddard-2008} If $C_{n}$ is any cycle of order $n$, then 
\begin{equation*}
\chi _{\rho }(C_{n})=\left\{ 
\begin{array}{ll}
3; & n=4k,k\geq 1,\text{ or }n=3, \\ 
4; & otherwise.%
\end{array}%
\right.
\end{equation*}%
\label{cycle_1}
\end{proposition}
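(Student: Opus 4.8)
\medskip\noindent\textit{Proof plan.}
The plan is to show that $3\le\chi_{\rho}(C_n)\le 4$ for every $n\ge 3$ and then to pin down exactly when each bound is attained. The lower bound $\chi_{\rho}(C_n)\ge 3$ comes for free from counting: an independent set (i.e.\ a $1$-packing) of $C_n$ has at most $\lfloor n/2\rfloor$ vertices, and a $2$-packing of $C_n$ has at most $\lfloor n/3\rfloor$ vertices (consecutive members along the cycle are at least $3$ apart), so a hypothetical $2$-packing coloring, being a partition into a $1$-packing $V_1$ and a $2$-packing $V_2$, would give $n=|V_1|+|V_2|\le\lfloor n/2\rfloor+\lfloor n/3\rfloor\le\tfrac{5n}{6}<n$.

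For the upper bound I would write down explicit colorings. Indexing the vertices cyclically as $v_0,\dots,v_{n-1}$, the period-$4$ pattern $1,2,1,3$ is a valid $3$-packing coloring when $4\mid n$: the vertices colored $1$ form the set of even-indexed vertices, hence an independent set, and for $i\in\{2,3\}$ the vertices colored $i$ are pairwise at cyclic distance $4>i$. This already gives $\chi_{\rho}(C_{4k})=3$ for $k\ge 1$, and together with the coloring $1,2,3$ of $C_3$ it settles both ``$3$-cases''. When $n\ge 5$ and $4\nmid n$, I would keep the pattern $1,2,1,3$ on a long arc and close the cycle with a short tail that spends the single extra color $4$ on one vertex, e.g.\ ending the sequence with $1,2,1,3,4$, $1,3,1,4$, or $1,2,1,3,1,2,4$ according to $n\bmod 4$; a routine check shows each color class remains a valid packing, so $\chi_{\rho}(C_n)\le 4$ in all cases.

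The substance of the argument, and the step I expect to be the main obstacle, is the matching lower bound $\chi_{\rho}(C_n)\ge 4$ for $n\ge 5$ with $4\nmid n$. Suppose $C_n$ had a $3$-packing coloring with classes $V_1,V_2,V_3$. I would traverse the cycle and list only the vertices outside $V_1$, in cyclic order; since $V_1$ is independent, two vertices that are consecutive in this list are at cyclic distance $1$ or $2$. No two consecutive entries can be colored alike (two vertices of $V_2$ need distance $>2$ and two of $V_3$ need distance $>3$, neither possible at distance $\le 2$), so the list strictly alternates $2,3,2,3,\dots$; a short counting argument then shows that when $n\ge 5$ each of colors $2,3$ is used at least twice (otherwise the list has length at most $2$ and $n\le\lfloor n/2\rfloor+2$, which is impossible), so the list has even length $m\ge 4$ with $|V_2|=|V_3|=m/2$.

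Finally, for any occurrence of the block $3,2,3$ in the list the two outer vertices lie in $V_3$, hence are at cyclic distance $>3$; writing that distance as a sum of two of the above steps, each at most $2$, it must equal exactly $4$, forcing both steps to be $2$. Because the vertices colored $3$ occupy every second position of the list, these constraints cover every gap between consecutive non-$V_1$ vertices and force them all to equal $2$; hence each vertex of $V_1$ is sandwiched between two non-$V_1$ vertices, so $|V_1|=m$ and $n=2m$. As $m=|V_2|+|V_3|$ is even, $n\equiv 0\pmod 4$, contradicting $4\nmid n$ (the cases $n\in\{5,6,7\}$, where the distance in the block $3,2,3$ could a priori be realized by the complementary arc, are checked directly). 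Collecting everything, $\chi_{\rho}(C_n)=3$ precisely when $n=3$ or $4\mid n$, and $\chi_{\rho}(C_n)=4$ otherwise.
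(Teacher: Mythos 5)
Your proposal is correct. Note, however, that the paper does not prove this proposition at all: it is stated as a known result imported from Goddard et al.\ \cite{goddard-2008}, so there is no in-paper argument to compare against. Your self-contained derivation is essentially the standard one and all of its steps check out: the counting bound $n\le\lfloor n/2\rfloor+\lfloor n/3\rfloor$ correctly rules out two colors; the periodic pattern $1,2,1,3$ (with a short terminal modification spending color $4$ once) gives the upper bounds; and the core lower-bound argument is sound --- the non-$V_1$ vertices must cyclically alternate $2,3,2,3,\ldots$ with consecutive gaps of length $1$ or $2$, the even length and $|V_2|=|V_3|\ge 2$ follow as you say, and since the graph distance between the endpoints of any $3,2,3$ block is the minimum of the two arcs, the block arc (of length at most $4$) must itself exceed $3$, forcing every gap to equal $2$ and hence $n=2m\equiv 0\pmod 4$. (In fact this last step already kills $n\in\{5,6,7\}$ directly, since $|V_3|\ge 2$ is impossible when the diameter is at most $3$, so your separate check of those cases is a harmless redundancy.) The only cosmetic looseness is in the list of terminal patterns for the upper bound: as written the tails of lengths $5$, $4$ and $7$ do not obviously cover $n\equiv 2\pmod 4$, but a tail such as $1,2,1,3,1,4$ (or appending $1,4$ after the repeated blocks) does, and the verification is routine as you claim.
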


Next, a \textit{finite super subdivision graph} of $G$, denoted by $%
FSSD_m(G) $, is a graph, which is obtained from $G$ by replacing each of its
edges with a complete bipartite graph $K_{2, m}$, where $m$ is a finite
number. Note that the subdivision of a given graph $G$, $S(G)$, is
equivalent to $FSSD_m(G)$, when $m=1$, and hence finite super subdivision
graphs are in some way a generalization of subdivisions of graph.

In the sequel of the paper, we use the following notations for the vertices
of \newline
$V(FSSD_m(G))$. The vertices corresponding to the vertices of $G$, denote by 
$u_1, u_2, \ldots, u_n$, and for any pair of vertices $u_i, u_j$, $i, j \in
\{1, 2, \ldots, n \}$, $i \neq j$, we denote the common neighbors of them by 
$u_{i, j}^k$ (these vertices will be called subdivided vertices), where $k
\in \{1, \ldots, m\}$.

%%%% CORONA, NEIGHBORHOOD CORONA

Next, continue with the definition of neighborhood corona graphs. 
%Given a graph $G$ with $|V(G)|=n_1$ and a graph $H$, a \textit{corona graph} of $G$ and $H$, denoted by $G \odot H$, is the graph obtained by one copy of  $G$ and $n_1$ copies of $H$, such that all vertices of $i$-th copy of $H$ are adjacent to $i$-th vertex of $G$. % \cite{k-2012}. 
%
The \textit{neighborhood corona graph} of a graph $G$ with $|V(G)|=n_1 $ and
a graph $H$ is the graph, obtained by one copy of $G$ and $n_1 $ copies of $%
H $, such that each vertex of $i$-th copy of $H$ is adjacent to all
neighbors of $i$-th vertex of $G$. This graph is denoted by $G \star H $. In
particular, when $H$ is isomorphic to $K_1$, $G \star H$ is called a \textit{%
splitting graph} and is denoted by $S^{\prime }(G)$. Note that $|V(G \star
H)|=n_1+n_1n_2$ and $|E(G \star H)|=m_{1}\left(2n_{2}+1\right)+n_{1}m_{2}$,
where $n_1$ and $n_2$ are the numbers of vertices of $G$ and $H$, and $m_1$, 
$m_2$ are the numbers of edges of $G$ resp. $H$ \cite{i-2011}.

In the sequel of this paper we consider finite super subdivision graphs of
the following neighborhood corona graphs: $K_n \star P_p$ and $C_n \star P_p$%
, where $n \geq 3$ and $p \geq 1$. We use the following notations of the
vertices of $FSSD_m(K_n \star P_p)$ respectively $FSSD_m(C_n \star P_p)$.
The vertices of $K_n \star P_p$ respectively $C_n \star P_p$, which are
corresponding to the vertices from $V(K_n)$ respectively $V(C_n)$, are
denoted by $u_1, u_2, \ldots, u_{n}$. For any $u_i, i\in \{1, \ldots, n\}$,
denote the corresponding copy of $P_p$ by $P_{i,p}$ and the vertices of $%
P_{i,p}$ by $v_{i,g}$, where $g \in \{1, 2, \ldots, p\}$ (in particular,
when $p=1$, these vertices are denoted by $v_i$). The common neighbors of $%
u_i$ and $u_j$, where $i,j \in \{1, 2, \ldots, n\}$, $i \neq j$, are denoted
by $u_{i,j}^k$, where $k \in \{1, 2, \ldots, m\}$. Analogously, the vertices
which connect $v_{i,g}$ and $v_{i,h}$, where $i \in \{1, 2, \ldots, n\}$ and 
$g, h \in \{1, 2, \ldots, p\}$, $g \neq h$, are denoted by $v_{i,g,h}^k$,
where $k \in \{1, 2, \ldots, m\}$. Finally, the vertices connecting $u_j$
and $v_{i,g}$, where $i, j \in \{1, 2, \ldots, n\}$, $i \neq j$ and $g\in
\{1, 2, \ldots, p\}$, are labeled by $s_{j,i,g}^k$, where $k \in \{1, 2,
\ldots, m\}$. In particular, if $p=1$, then the common neighbors of $u_i$
and $v_{j}$ are denoted by $s_{i,j}^k$ for any $i,j \in \{1, 2, \ldots, n\}$ 
$i \neq j$, and $k \in \{1, 2, \ldots, m\}$.

\section{Finite super subdivisions}

In this section we study general properties of finite super subdivision
graphs. While the next proposition have been already proven for subdivisions
of graph (i.e. graphs $FSSD_{m}(G)$, when $m=1$), we present its
generalization (considering the number $m$). Namely, we provide the lower
and the upper bound for $\chi _{\rho }(FSSD_{m}(G))$.

\begin{proposition}
If $m \geq 1$ and $G$ is a connected graph with at least three vertices,
then 
\begin{equation*}
\omega(G)+1 \leq \chi_\rho(FSSD_m(G)) \leq \chi_\rho(G)+1.
\end{equation*}
\label{prop1}
\end{proposition}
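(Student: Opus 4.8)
The idea is that the ordinary subdivision $S(G)=FSSD_1(G)$ sits inside $FSSD_m(G)$ as a subgraph: for each edge $u_iu_j$ of $G$ keep only the subdivided vertex $u_{i,j}^1$ together with the two edges $u_iu_{i,j}^1$ and $u_ju_{i,j}^1$, and delete the remaining subdivided vertices $u_{i,j}^k$ with $k\geq 2$. Since every edge of $FSSD_m(G)$ joins an original vertex to a subdivided one, what is left is precisely a copy of $S(G)$, so $S(G)$ is a subgraph of $FSSD_m(G)$. By Proposition~\ref{subgraph}, $\chi_\rho(S(G))\leq\chi_\rho(FSSD_m(G))$, and since $G$ is connected with at least three vertices the known bound $\omega(G)+1\leq\chi_\rho(S(G))$ from~\cite{bkr-2007} closes this half: $\omega(G)+1\leq\chi_\rho(S(G))\leq\chi_\rho(FSSD_m(G))$.

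\textbf{Plan for the upper bound.} Let $r=\chi_\rho(G)$ and fix an optimal packing coloring $c\colon V(G)\to[r]$. I would define a coloring $c'$ of $FSSD_m(G)$ with color set $[r+1]$ by a shift: put $c'(u_i)=c(u_i)+1$ on the original vertices and $c'(u_{i,j}^k)=1$ on every subdivided vertex. Two things must be checked. First, no two subdivided vertices are adjacent, since each is adjacent only to the two original vertices of its $K_{2,m}$; hence the set of all subdivided vertices is independent and legitimately forms color class $1$, and since every original vertex receives a color $\geq 2$ there is no color-$1$ conflict between an original and a subdivided vertex.

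\textbf{The remaining check and where the (mild) obstacle lies.} Second, suppose $c'(u_i)=c'(u_j)=\ell$ for two original vertices, so $\ell=c(u_i)+1=c(u_j)+1\geq 2$ and, as $c$ is a packing coloring, $d_G(u_i,u_j)>c(u_i)=\ell-1$, i.e.\ $d_G(u_i,u_j)\geq\ell$. The crux is the identity $d_{FSSD_m(G)}(u_i,u_j)=2\,d_G(u_i,u_j)$ for original vertices: $FSSD_m(G)$ is bipartite with the original vertices on one side and the subdivided vertices on the other, so every $u_i$--$u_j$ walk has even length $2t$ and projects to a $u_i$--$u_j$ walk of length $t$ in $G$ (read off, for each subdivided vertex used, the pair of original vertices it is attached to); conversely any walk of length $t$ in $G$ lifts to one of length $2t$ by routing each edge through an arbitrary subdivided vertex (available since $m\geq 1$). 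Hence $d_{FSSD_m(G)}(u_i,u_j)=2\,d_G(u_i,u_j)\geq 2\ell>\ell$, so color class $\ell$ is an $\ell$-packing and $c'$ is a valid $(r+1)$-packing coloring, giving $\chi_\rho(FSSD_m(G))\leq r+1=\chi_\rho(G)+1$. No step here is genuinely difficult; the only point demanding care is this distance identity — equivalently, the bipartiteness of $FSSD_m(G)$ and the fact that traversing one $K_{2,m}$ multiplies edge-lengths by exactly $2$ — since the whole extension argument rests on the slack $2\ell>\ell$ it produces.
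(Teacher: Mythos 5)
Your proof is correct and follows essentially the same route as the paper: the lower bound via the subgraph $FSSD_1(G)=S(G)$ together with Proposition~\ref{subgraph} and the known bound from~\cite{bkr-2007}, and the upper bound via the shifted coloring $c'(u_i)=c(u_i)+1$, $c'(u_{i,j}^k)=1$. Your extra care in justifying the distance identity $d_{FSSD_m(G)}(u_i,u_j)=2\,d_G(u_i,u_j)$ only makes explicit what the paper asserts directly.
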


\begin{proof}
It is known that the result holds for $m=1$ \cite{bkr-2007}. Then, since for any $m \geq 1$ $FSSD_m(G)$ contains a subgraph isomorphic to $FSSD_1(G)$, Proposition \ref{subgraph} implies that $\omega(G)+1 \leq \chi_\rho(FSSD_1(G)) \leq \chi_\rho(FSSD_m(G))$ for each $m \geq 1$. 
In order to provide the upper bound, denote by $c$ any optimal packing coloring of $G$ and define a coloring $c'$ of $FSSD_m(G)$ as follows: $c'(u_{i,j}^k)=1$ and $c'(u_i)=c(u_i)+1$ for any $i,j \in \{1, \ldots, n\}$, $i \neq j$, and $k \in \{1, \ldots, m\}$. It is easy to check that $c'$ is a packing coloring. Namely, suppose that $c'(u_i)=c'(u_j)=l \geq 2$ for some $i$, $j$ and color $l$. Then $c(u_i)=c(u_j)=l-1$, which implies that $d_G(u_i,u_j) \geq l$ and thus $d_{FSSD_m(G)}(u_i,u_j) \geq 2l \geq l+1$ for any $l \geq 2$. Additionally, note that the distance between any two vertices both colored with color $1$ is at least $2$. Therefore, $c'$ is a packing coloring of $FSSD_m(G)$ and since it uses color $1$ and exactly $\chi_\rho(G)$ other colors, $\chi_\rho(FSSD_m(G)) \leq \chi_\rho(G)+1$, what completes the proof. 
\qed 
\end{proof}

Note that in the case of graphs $G$ with $\chi_\rho(G)=\omega(G)$, the
written bounds provide the exact values for $\chi_\rho(FSSD_m(G))$, $m \geq
1 $. For example, applying the written proposition, we derive the following
corollary.

\begin{corollary}
For any $n \geq 3$ and $m \geq 1$, $\chi_\rho(FSSD_m(K_n))=n+1.$ \label{Cor}
\end{corollary}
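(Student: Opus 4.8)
The plan is to derive this directly from Proposition~\ref{prop1} together with the known formula $\chi_\rho(K_n)=n$. Since $K_n$ is a connected graph on $n\geq 3$ vertices, Proposition~\ref{prop1} gives $\omega(K_n)+1\leq \chi_\rho(FSSD_m(K_n))\leq \chi_\rho(K_n)+1$ for every $m\geq 1$. The only thing to observe is that for the complete graph the lower and upper bounds coincide: $\omega(K_n)=n$ and $\chi_\rho(K_n)=n$ (the latter because in $K_n$ all pairwise distances equal $1$, so every color class of a packing coloring is a $1$-packing, i.e.\ an independent set, and independent sets in $K_n$ are singletons, forcing $n$ colors; conversely assigning distinct colors $1,2,\ldots,n$ to the vertices works). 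Hence both bounds equal $n+1$ and the result follows.

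I would write this as a two-sentence proof: first invoke $\chi_\rho(K_n)=\omega(K_n)=n$ with a one-line justification, then quote Proposition~\ref{prop1} to sandwich $\chi_\rho(FSSD_m(K_n))$ between $n+1$ and $n+1$. There is essentially no obstacle here; the corollary is a pure specialization of the preceding proposition to a graph for which the packing chromatic number meets the clique-number lower bound, which is exactly the situation highlighted in the remark immediately preceding the statement. The only minor care needed is to state why $\chi_\rho(K_n)=n$ rather than leaving it implicit, so that the reader sees both ends of the inequality chain evaluate to the same number.

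\begin{proof}
Since all pairwise distances in $K_n$ equal $1$, every $i$-packing of $K_n$ (for any $i\geq 1$) is an independent set, hence a singleton; thus a packing coloring of $K_n$ must assign pairwise distinct colors to its $n$ vertices, while the coloring $u_i\mapsto i$ shows $n$ colors suffice. Therefore $\chi_\rho(K_n)=n=\omega(K_n)$. As $K_n$ is connected with at least three vertices, Proposition \ref{prop1} yields
\begin{equation*}
n+1=\omega(K_n)+1\leq \chi_\rho(FSSD_m(K_n))\leq \chi_\rho(K_n)+1=n+1
\end{equation*}
for every $m\geq 1$, so $\chi_\rho(FSSD_m(K_n))=n+1$.
\qed
\end{proof}
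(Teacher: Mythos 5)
Your proof is correct and follows exactly the route the paper intends: the corollary is stated as an immediate consequence of Proposition~\ref{prop1} for graphs with $\chi_\rho(G)=\omega(G)$, and you correctly verify $\chi_\rho(K_n)=\omega(K_n)=n$ so that the two bounds coincide at $n+1$. Your write-up is if anything slightly more complete than the paper's, which leaves the justification of $\chi_\rho(K_n)=n$ implicit.
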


In addition, complete graphs (and other graphs $G$ with $\chi_\rho(G)=%
\omega(G)$) prove that the above written bounds are sharp. \newline

%%%%%%%%%%%%%%%%%%%%%%%%%%%%%%%%%%%%%%%%%%%%%%%%%%%%%%%%%%%%%%%%%%%%%%%%%%%%%%%%%%%%%%%%%%%%%%%%%%%%%%%%%%%%%%%%%%%

%%%%%%%%%%%%%%%%%%%%%%%%%%%%%%%%%%%%%%%%%%%%%%%%%%%%%%%%%%%%%%%%%%%%%%%%%%%%%
Since by Proposition \ref{prop1}, the packing chromatic number of the family
of graphs $\{FSSD_{1}(G)$,$FSSD_{2}(G), FSSD_{3}(G),\ldots \}$ is bounded
from above, and by Proposition \ref{subgraph}, $\chi_\rho(FSSD_m(G)) \leq
\chi_\rho(FSSD_{m+1}(G))$ for any graph $G$ and any $m \geq 1$, we infer
that the packing chromatic numbers of graphs $FSSD_{m}(G)$, $FSSD_{m+1}(G)$, 
$FSSD_{m+2}(G)$, \ldots ~ are equal, when $m$ is large enough. Hence, we are
interested in question, for which $m$ is $\chi_\rho(FSSD_{m}(G))=\chi_%
\rho(FSSD_{m+1}(G))=\chi_\rho(FSSD_{m+2}(G))= \ldots$. With the following
two propositions, we prove that the written equalities hold for any $m$
greater than $\frac{\chi _{\rho }(G)}{\delta (G)}$. In addition, we prove
that in the case of complete graphs of order at least $3$, bipartite graphs
of order at least $3$ and cycles, the equality actually holds for any $m
\geq 1$. %
%In other words, with the first proposition of this section is proven that the packing chromatic number of the family of graphs $\{FSSD_{1}(G),FSSD_{2}(G),\ldots \}$ is bounded, and this proposition shows which bound should $m$ achieve that the packing chromatic numbers of graphs become a constant. 

\begin{proposition}
Let $G$ be a graph and $m \geq 1$ a positive integer. If there exists an
optimal packing coloring $c$ of $FSSD_m(G)$ which assigns to all subdivided
vertices a color $1$, then $\chi_\rho(FSSD_m(G))=\chi_\rho(FSSD_{m+1}(G))$. %
\label{prop_color1}
\end{proposition}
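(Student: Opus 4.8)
The plan is to transfer an optimal packing coloring of $FSSD_m(G)$ to one of $FSSD_{m+1}(G)$ that uses the same number of colors; combined with the inequality $\chi_\rho(FSSD_m(G)) \leq \chi_\rho(FSSD_{m+1}(G))$ from Proposition \ref{subgraph}, this yields equality. The key observation is that $FSSD_{m+1}(G)$ is obtained from $FSSD_m(G)$ by adding, for every edge $u_iu_j$ of $G$, exactly one new subdivided vertex $u_{i,j}^{m+1}$, adjacent precisely to $u_i$ and $u_j$.

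First I would take $c$ to be the given optimal packing coloring of $FSSD_m(G)$ that assigns color $1$ to every subdivided vertex. I would then define $c'$ on $FSSD_{m+1}(G)$ by setting $c'(x) = c(x)$ for every vertex $x$ that already belongs to $FSSD_m(G)$, and $c'(u_{i,j}^{m+1}) = 1$ for each new subdivided vertex. The number of colors used is unchanged, so it only remains to verify that $c'$ is a valid packing coloring. For pairs of vertices both lying in $FSSD_m(G)$, nothing needs to be checked since $c'$ agrees with $c$ there and distances in $FSSD_{m+1}(G)$ between such vertices are the same as in $FSSD_m(G)$ (any path through a new vertex $u_{i,j}^{m+1}$ can be rerouted through $u_{i,j}^1$ at no extra cost). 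So the only new constraints involve the new vertices, all colored $1$: I must check that any two distinct vertices colored $1$ are at distance at least $2$, which is immediate since color-$1$ vertices form an independent set as long as no new vertex is adjacent to another color-$1$ vertex — but each $u_{i,j}^{m+1}$ is adjacent only to $u_i$ and $u_j$, which are original vertices of $G$ carrying colors from $c$; those colors are among the non-$1$ colors precisely when the original coloring never colors a $u_i$ with $1$.

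The main obstacle, then, is the possibility that the original optimal coloring $c$ assigns color $1$ to some original vertex $u_i$: in that case $u_i$ and the new neighbor $u_{i,j}^{m+1}$ would be two adjacent color-$1$ vertices, breaking the independence requirement. To handle this I would argue that under the hypothesis — all subdivided vertices get color $1$ — no original vertex $u_i$ of $G$ can also get color $1$. Indeed, since $G$ is connected (and has at least one edge, as implicitly required for $FSSD_m(G)$ to differ from $G$), each $u_i$ has at least one neighbor $u_j$ in $G$, hence shares the common neighbor $u_{i,j}^1$ in $FSSD_m(G)$; that subdivided vertex is colored $1$ and is at distance $1$ from $u_i$, so $u_i$ cannot also be colored $1$. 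Thus every original vertex carries a color $\geq 2$, the set $c'^{-1}(1)$ consisting of all old and new subdivided vertices is independent in $FSSD_{m+1}(G)$, and $c'$ is a valid packing coloring with $\chi_\rho(FSSD_m(G))$ colors. Therefore $\chi_\rho(FSSD_{m+1}(G)) \leq \chi_\rho(FSSD_m(G))$, and together with the reverse inequality the equality follows.
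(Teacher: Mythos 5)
Your proposal is correct and follows essentially the same route as the paper: copy the colors of the vertices $u_i$, give color $1$ to all subdivided vertices of $FSSD_{m+1}(G)$, and invoke Proposition \ref{subgraph} for the reverse inequality. You additionally spell out the verification the paper leaves implicit (distance preservation between old vertices, and the fact that the hypothesis forces $c(u_i)\neq 1$ for every non-isolated $u_i$, so the color-$1$ class stays independent), which is a welcome clarification rather than a deviation.
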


\begin{proof} 
Let $G$ be a graph, $m \geq 1$ a positive integer and $c$ an optimal packing coloring of $FSSD_m(G)$ which assigns to all subdivided vertices a color $1$. By setting $c'(u_i)=c(u_i)$ for any $u_i$, and assigning a color $1$ to all other vertices of $FSSD_{m+1}(G)$, we get a $\chi_\rho(FSSD_m(G))$-packing coloring of $FSSD_{m+1}(G)$ and hence, $\chi_\rho(FSSD_m(G))=\chi_\rho(FSSD_{m+1}(G))$.
\qed
\end{proof}

\begin{proposition}
If $G$ is any graph and $m > \frac{\chi_\rho(G)} {\delta(G)}$ a positive
integer, then 
\begin{equation*}
\chi_\rho(FSSD_m(G))=\chi_\rho(FSSD_{m+1}(G)).
\end{equation*}
\end{proposition}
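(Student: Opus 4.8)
The plan is to reduce everything to Proposition~\ref{prop_color1}: it suffices to exhibit an \emph{optimal} packing coloring of $FSSD_m(G)$ that assigns color $1$ to every subdivided vertex, since then the equality $\chi_\rho(FSSD_m(G))=\chi_\rho(FSSD_{m+1}(G))$ follows at once. So I would start from an arbitrary optimal packing coloring $c$ of $FSSD_m(G)$ with $k=\chi_\rho(FSSD_m(G))$ colors and massage it into one of the desired form, keeping the colors of the vertices $u_1,\ldots,u_n$ untouched.

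The key observation is that under the hypothesis no original vertex $u_i$ can be colored $1$ in $c$. Indeed, $m>\chi_\rho(G)/\delta(G)$ gives $m\,\delta(G)>\chi_\rho(G)$, hence $m\,\delta(G)\ge\chi_\rho(G)+1$ by integrality, and together with the upper bound $\chi_\rho(FSSD_m(G))\le\chi_\rho(G)+1$ of Proposition~\ref{prop1} this yields $k\le m\,\delta(G)$. Now the neighborhood of $u_i$ in $FSSD_m(G)$ consists exclusively of subdivided vertices, has size $m\deg_G(u_i)\ge m\,\delta(G)\ge k$, and --- since any two subdivided vertices are non-adjacent --- its members are pairwise at distance exactly $2$. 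Each color from $\{2,\ldots,k\}$ can therefore be used on at most one of them, so at most $k-1$ of these (at least $k$) neighbors carry a color $\ge 2$; hence some neighbor of $u_i$ has color $1$, which forces $c(u_i)\ne 1$.

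Having established this, I would define $c'$ on $FSSD_m(G)$ by $c'(u_i)=c(u_i)$ for all $i$ and $c'(v)=1$ for every subdivided vertex $v$, and verify it is a packing coloring: two subdivided vertices now both colored $1$ are non-adjacent, so at distance $>1$; a subdivided vertex (color $1$) and a vertex $u_i$ never clash, because no $u_i$ has color $1$; and if $c'(u_i)=c'(u_j)=\ell\ge 2$ then $c(u_i)=c(u_j)=\ell$, so $d_{FSSD_m(G)}(u_i,u_j)>\ell$ already held. Thus $c'$ is a $k$-packing coloring, hence optimal, and it colors all subdivided vertices with $1$; Proposition~\ref{prop_color1} then finishes the argument.

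The one genuinely delicate point is the middle paragraph --- turning the numerical hypothesis on $m$ into the structural statement that color $1$ is confined to subdivided vertices in every optimal coloring; the rest is bookkeeping. I would also spend a line on the degenerate cases hidden behind ``any graph $G$'': if $G$ is disconnected the same count goes through componentwise, using $\chi_\rho(FSSD_m(G))=\max_{G'}\chi_\rho(FSSD_m(G'))$ over the components $G'$ and $\delta(G)=\min_{G'}\delta(G')$, and $\delta(G)\ge 1$ is needed anyway for the hypothesis to make sense.
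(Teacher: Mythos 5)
Your proposal is correct and follows essentially the same route as the paper: use the hypothesis $m>\chi_\rho(G)/\delta(G)$ together with the upper bound of Proposition~\ref{prop1} to show that no vertex $u_i$ can receive color $1$ in an optimal packing coloring of $FSSD_m(G)$ (via the counting argument on the $m\deg_G(u_i)$ pairwise-distance-$2$ neighbors), then recolor all subdivided vertices with $1$ and invoke Proposition~\ref{prop_color1}. Your write-up is in fact slightly more careful than the paper's, since you explicitly verify that the modified coloring is still a packing coloring and you address the disconnected case.
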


\begin{proof}
Let $m > \frac{\chi_\rho(G)}{\delta(G)}$ be an arbitrary positive integer. Then $m > \frac{\chi_\rho(G)}{deg_G(u_i)}$ for each $u_i, i \in \{1, \ldots, n\}$, and thus $m \cdot deg_G(u_i) > \chi_\rho(G)$. Clearly, $deg_{FSSD_m(G)}(u_i)=deg_G(u_i) \cdot m$ for any $u_i$, thus $1+deg_{FSSD_m(G)}(u_i) > 1+ \chi_\rho(G)$. By Proposition \ref{prop1}, it follows that $1+deg_{FSSD_m(G)}(u_i) > \chi_\rho(FSSD_m(G))$.

Next, let $c$ be any optimal packing coloring of $FSSD_m(G)$. In order to prove that if $c$ assigns a color $1$ to a vertex $u_i$, $i \in \{1, \ldots, n\}$, then it uses at least $1+deg_{FSSD_m(G)}(u_i)$ colors. Suppose that $c(u_i)=1$ for some $i \in \{1, \ldots, n\}$. Then, the neighbors of $u_i$ get pairwise different colors (since they are pairwise at distance $2$, but cannot be colored with color $1$) and hence $c$ uses at least $1+deg_{FSSD_m(G)}(u_i)$ colors (actually these colors are already required for a packing coloring of vertices from $N_{FSSD_m(G)}[u_i]$). 
But since $1+deg_{FSSD_m(G)}(u_i) > \chi_\rho(FSSD_m(G))$, $c$ is not optimal, so we have a contradiction. Therefore, $c(u_i) \neq 1$ for any $i \in \{1, \ldots, n \}$ and without loss of generality, we may assume that all other (subdivided) vertices get color $1$ (since they are pairwise at distance at least $2$). Then, Proposition \ref{prop_color1} implies the result.

%Note that, if additional color (different from colors $c(u_1), c(u_2), \ldots, c(u_n)$) is not required for packing coloring of the vertices $u_{i,j}^k$, $i, j \in \{1, \ldots, n\}$, $i \neq j$, $k \in \{1, \ldots, m\}$, then one of the colors $c(u_1), c(u_2), \ldots, c(u_n)$ is $1$, a contradiction. 

%\Jasmina{In addition, the argument also holds for any optimal packing coloring of $FSSD_{l}(G)$ for $l > m$}.
%Note that $d_{FSSD_m(G)}(u_i, u_j)= d_{FSSD_{m+1}(G)}(u_i, u_j)$ for any two $u_i, u_j$, where $i,j \in \{1, \ldots, n\}$, $i \neq j$. Thus any optimal packing coloring of $FSSD_m(G)$ and $FSSD_{m+1}(G)$ use the same number of colors for the packing coloring of vertices from $\{u_1, \ldots, u_n\}$ (and all of the others vertices get color $1$). Therefore $\chi_\rho(FSSD_m(G))=\chi_\rho(FSSD_{m+1}(G))$.
\qed 
\end{proof}

Therefore, if $m > \frac{\chi_\rho(G)}{\delta(G)}$, then $%
\chi_\rho(FSSD_m(G))=\chi_\rho(FSSD_{m+1}(G))$, but the case when $m \leq 
\frac{\chi_\rho(G)}{\delta(G)}$ is still opened. Clearly, also in this case $%
\chi_\rho(FSSD_m(G)) \leq \chi_\rho(FSSD_{m+1}(G))$ holds for any graph $G$
and any $m \geq 1$. As we will see, there exist graphs, for example cycles
of order $2k+1$, $k \geq 2$ (see Propositions \ref{cycle_1} and \ref{cycle}%
), which satisfy the property that $\chi_\rho(G)=\chi_\rho(FSSD_m(G))$ for
any $m \geq 1$. But for the others, there is a question of when there appear
a change of the value of the packing chromatic number. More precisely, for
which $m$ we have $\chi_\rho(FSSD_m(G)) <
\chi_\rho(FSSD_{m+1}(G)=\chi_\rho(FSSD_{m+i}(G))$ for any $i \geq 2$? Based
on the Proposition \ref{prop_color1}, we derive that a necessary condition
for the inequality of $\chi_\rho(FSSD_m(G))$ and $\chi_\rho(FSSD_{m+1}(G))$
is that any optimal packing coloring of $FSSD_m(G)$ assigns to at least one
of the subdivided vertices a color greater than $1$. For example, any
optimal packing coloring of $FSSD_1(K_2)$ assigns to a subdivided vertex a
color $2$ and we have: $2=\chi_\rho(FSSD_1(K_2))<\chi_\rho(FSSD_{m}(K_2))=3$%
, $m \geq 2$. Also for the Petersen graph, $\chi_\rho(FSSD_1(P))=5$ (see 
\cite{bkrw-2017b}) and it is easy to observe that there exists an optimal
packing coloring $FSSD_m(P)$, $m \geq 2$, which assigns to all subdivided
vertices a color $1$, which implies that $\chi_\rho(FSSD_m(P))=\chi_%
\rho(FSSD_{m+1}(P)) \geq 6$ for any $m \geq 2$. We have to mention that we
were not able to find any other graph $G$ such that $\chi_\rho(FSSD_1(G))<%
\chi_\rho(FSSD_{2}(G))$, and in addition, we have not found any graphs $G$
with the property that $\chi_\rho(FSSD_m(G))<\chi_\rho(FSSD_{m+1}(G))$ for
any $m \geq 2$. Hence, there arises an open question of whether there exists
a graph $G$ with the written property. \newline

%%%%%%%%%%%%%%%%%%%%%%%%%%%%%%%%%%%%%%%%%%%%%%%%%%%%%%%%%%%%%%%%%%%%%%%%%%%%%%%%%
We continue with the consideration of packing chromatic numbers of finite
super subdivision graphs of bipartite graphs and (other) cycles. While the
result has been already known for all bipartite graphs (it follows from
Proposition 3.3. in \cite{goddard-2008}), we give the exact values also for
cycles. \newline
%%%%%%%%%%%%%%%%%%%%%%%%%%%%%%%%%%%%%%%%%%%%%%%%%%%%%%%%%%%%%%%%%%%%%%%%%%%%%%%%%%%%%%%%%%%%%%%%%%%%%%%%%%%%%%%%%%%%%%%%%%%%%%%%%%%%%%%%%%%%%%%%

\begin{proposition}
For any bipartite graph $G$ of order at least $3$ and any $m\geq 1$, 
\begin{equation*}
\chi _{\rho }(FSSD_{m}(G))=3.
\end{equation*}%
\label{bipartite}
\end{proposition}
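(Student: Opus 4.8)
The plan is to prove the two inequalities $\chi_\rho(FSSD_m(G))\ge 3$ and $\chi_\rho(FSSD_m(G))\le 3$ separately. For the lower bound I would simply invoke Proposition~\ref{prop1}: a connected graph $G$ with at least three vertices contains an edge, so $\omega(G)\ge 2$, and hence $\chi_\rho(FSSD_m(G))\ge \omega(G)+1\ge 3$. (If one prefers a self-contained argument, $FSSD_m(G)$ contains a $P_5$ obtained by taking any vertex $u_i$ of $G$ of degree at least two, two of its $G$-neighbours $u_{j_1},u_{j_2}$, and the subdivided vertices $u_{i,j_1}^1,u_{i,j_2}^1$; since $\chi_\rho(P_5)=3$, Proposition~\ref{subgraph} gives the bound.)

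For the upper bound, the key preliminary observation is a distance formula. In $FSSD_m(G)$ the original vertices $u_1,\dots,u_n$ are pairwise non-adjacent, and so are the subdivided vertices $u_{i,j}^k$ (each being adjacent only to two original vertices); thus $FSSD_m(G)$ is bipartite with these two sets as its parts. Consequently any path between two original vertices has even length and, reading off its original-indexed vertices, projects to a walk of half that length in $G$; this yields $d_{FSSD_m(G)}(u_i,u_j)=2\,d_G(u_i,u_j)$ for all $i,j$ (for the argument only the inequality ``$\ge$'' is needed). Now I would use that $G$ is bipartite: fix a bipartition $V(G)=A\cup B$ into two independent sets, and define a colouring $c$ of $FSSD_m(G)$ by giving colour $1$ to every subdivided vertex, colour $2$ to every $u_i\in A$, and colour $3$ to every $u_i\in B$.

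It then remains to check that $c$ is a packing colouring. The colour-$1$ class is precisely the set of subdivided vertices, which is independent, hence a $1$-packing. If $u_i,u_j$ are distinct original vertices lying in the same part, then independence of that part gives $d_G(u_i,u_j)\ge 2$, so $d_{FSSD_m(G)}(u_i,u_j)=2\,d_G(u_i,u_j)\ge 4$; therefore $A$ is a $2$-packing and $B$ is a $3$-packing. Hence $\chi_\rho(FSSD_m(G))\le 3$, and together with the lower bound, $\chi_\rho(FSSD_m(G))=3$. I do not expect a genuinely hard step here; the argument is routine once the distance formula is in place. The only point requiring care is that the statement tacitly needs $G$ to be connected (or at least to have no component that is a single edge or an isolated vertex): for instance $FSSD_1(K_2\cup K_1)=P_3\cup K_1$ has packing chromatic number $2$. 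So I would either add the connectedness hypothesis explicitly or phrase the proof for connected $G$, in line with the surrounding results such as Proposition~\ref{prop1}.
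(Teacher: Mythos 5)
Your proof is correct, but note that the paper does not actually prove this proposition: it is stated with only a pointer to Proposition~3.3 of Goddard et al.\ \cite{goddard-2008}, so there is no in-paper argument to compare against. Your self-contained version supplies exactly the natural proof: the lower bound follows from Proposition~\ref{prop1} (or from the $P_5$, equivalently $P_4$, subgraph together with Proposition~\ref{subgraph}), and the upper bound from the colouring that puts colour $1$ on the (independent) set of subdivided vertices and colours $2$ and $3$ on the two sides of a bipartition of $G$, justified by the observation that $d_{FSSD_m(G)}(u_i,u_j)\ge 2\,d_G(u_i,u_j)\ge 4$ for two original vertices in the same part. This is the same colouring scheme the paper uses implicitly elsewhere (e.g.\ in the even case of Proposition~\ref{cycle}), so your route is the intended one, just written out. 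Your caveat about connectedness is also well taken: as literally stated the proposition fails for, say, $K_2\cup K_1$, and the standing hypothesis of Proposition~\ref{prop1} (connected, order at least $3$) is indeed what is needed; flagging that is a genuine, if minor, correction to the statement rather than a gap in your argument.
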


%%%%%%%%%%%%%%%%%%%%%%%%%%%%%%%%%%%%%%%%%%%%%%%%%%%%%%%%%%%%%%%%%%%%%%%%%%%%%%%%%%%%%%%%%%%%%%%%%%%%%%%%%%%%%%%%%%%%%%%%%%%%%%%%%%%%%%%%%%%%%%%%%%%%%%%%%%%%%%%%%%%%%%%%%%%%%%%%%%%%%%%%%%%%%%%%%%%%%%%%%%%%%%%%%%%%%%%%%%%%%%

\begin{proposition}
If $n\geq 3$ and $m\geq 1$, then 
\begin{equation*}
\chi _{\rho }(FSSD_{m}(C_{n}))=\left\{ 
\begin{array}{ll}
3; & n~\text{is even}, \\ 
4; & n~\text{is odd}.%
\end{array}%
\right.
\end{equation*}%
\label{cycle}
\end{proposition}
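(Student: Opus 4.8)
The plan is to handle the even and odd cases separately, exploiting the bounds and tools already in place.

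\medskip

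For $n$ even, $C_n$ is bipartite, so Proposition~\ref{bipartite} immediately gives $\chi_\rho(FSSD_m(C_n)) = 3$ for every $m \geq 1$. (Alternatively, one can construct the $3$-coloring explicitly: assign color $1$ to all subdivided vertices, then color the cycle-vertices $u_1, \ldots, u_n$ using colors $2$ and $3$ alternately around the even cycle; the vertices colored $2$ are pairwise at distance at least $4$ in $FSSD_m(C_n)$, the vertices colored $3$ likewise, and two subdivided vertices are never adjacent. Since $FSSD_m(C_n)$ is not bipartite-free of odd structure in the sense needed to drop below $3$ — it contains $K_{2,m}$ which for $m\ge 2$ already forces three colors, and for $m=1$ the path-subdivision of a cycle of length $\geq 3$ also needs $3$ — we get the lower bound $\chi_\rho \geq 3$.)

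\medskip

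For $n$ odd, the upper bound $\chi_\rho(FSSD_m(C_n)) \leq 4$ follows from Proposition~\ref{prop1}: since $C_n$ is odd, $\chi_\rho(C_n) = 4$ when $n$ is odd and $n \neq 3$ by Proposition~\ref{cycle_1}... wait, $\chi_\rho(C_3)=3$, so for $n=3$ we get only the upper bound $\chi_\rho(FSSD_m(C_3)) \leq 4$ and the lower bound $\omega(C_3)+1 = 4$, which pins it down. For odd $n \geq 5$, Proposition~\ref{prop1} gives $\chi_\rho(FSSD_m(C_n)) \leq \chi_\rho(C_n)+1 = 5$, which is too weak, so instead I would give a direct construction of a $4$-packing coloring: put color $1$ on all subdivided vertices and then it remains to properly assign colors from $\{2,3,4\}$ to $u_1,\ldots,u_n$ so that the distance constraints (distance $>2$ for color $2$, i.e. the $u_i$'s colored $2$ are pairwise nonadjacent-in-$C_n$; distance $>3$ for color $3$; distance $>4$ for color $4$, where distances are measured in $FSSD_m(C_n)$, i.e. exactly twice the $C_n$-distance) are met. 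Concretely: two $u$'s colored $2$ must be at $C_n$-distance $\geq 2$, two colored $3$ at $C_n$-distance $\geq 2$ as well, and two colored $4$ at $C_n$-distance $\geq 3$. One checks that the pattern $2,3,2,3,\ldots,2,3,4$ of length $n$ (valid since $n$ is odd, so it ends $\ldots,2,3,4$ with the wrap-around neighbor of the final $4$ being a $2$) works: the $2$'s and $3$'s are each an independent set in $C_n$, and the single $4$ trivially satisfies its constraint.

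\medskip

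The lower bound $\chi_\rho(FSSD_m(C_n)) \geq 4$ for odd $n$ is the main obstacle. The idea is to argue that $3$ colors do not suffice. Suppose a $3$-packing coloring $c$ exists. First observe that color $1$ forms an independent set, color $2$ a set of pairwise-distance-$\geq 3$ vertices, color $3$ a set of pairwise-distance-$\geq 4$ vertices. Now examine the $u_i$'s and the subdivided vertices around the cycle. The key combinatorial fact to extract is that in $FSSD_m(C_n)$ the subdivided vertices $u_{i,i+1}^k$ ($k=1,\ldots,m$) between consecutive $u_i,u_{i+1}$ form, together, a large set of vertices all at mutual distance $2$, so at most one of them can receive color $2$ and at most... actually none can receive color $3$ if $m\ge 2$, and even color $2$ can appear on at most one of them. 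Combined with the fact that $u_i$ and $u_{i+1}$ are at distance $2$ from every $u_{i,i+1}^k$, a counting/parity argument on the cyclic sequence of colors of $u_1,\ldots,u_n$ — essentially that colors $2$ and $3$ on the $u_i$'s can each appear with gaps but the odd length forces a conflict, much as in the proof that $\chi_\rho(C_{2k+1}) = 4$ — yields the contradiction. I would mirror the structure of the standard proof of Proposition~\ref{cycle_1} for odd cycles, transplanted through the subdivision, taking care that distances in $FSSD_m(C_n)$ between $u_i$ and $u_j$ equal $2\,d_{C_n}(u_i,u_j)$, so the effective constraints on the cycle-vertices are: color-$2$ vertices pairwise $C_n$-distance $\geq 2$, color-$3$ vertices pairwise $C_n$-distance $\geq 2$ — and then also accounting for subdivided vertices that might carry colors $2$ or $3$ and thereby block nearby $u_i$'s. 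Handling the subdivided vertices' possible colors $2,3$ carefully is where the argument needs the most care; the cleanest route is to first show that in any optimal coloring we may assume all subdivided vertices get color $1$ (using a local recoloring / exchange argument), after which the problem reduces exactly to coloring $C_n$ with the relaxed constraints above, and the odd-cycle parity obstruction finishes it.
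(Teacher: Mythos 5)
Your even case and your upper bound for odd $n$ are correct and coincide with the paper's argument (Proposition~\ref{bipartite} for even $n$; color $1$ on all subdivided vertices and the pattern $2,3,2,3,\ldots,2,3,4$ on $u_1,\ldots,u_n$ for odd $n$, with exactly the distance bookkeeping you describe). The problem is the lower bound $\chi_\rho(FSSD_m(C_n))\geq 4$ for odd $n\geq 5$, which you correctly identify as the crux but do not actually prove. Your plan hinges on the claim that any hypothetical $3$-packing coloring can be normalized, ``using a local recoloring / exchange argument,'' so that every subdivided vertex receives color $1$; this step is only asserted. It is not routine: for $m=1$ a vertex $u_i$ may legitimately receive color $1$ (its two neighbors can then take colors $2$ and $3$), so you cannot immediately force all $u_i$'s onto $\{2,3\}$, and a subdivided vertex carrying color $2$ or $3$ interacts with the constraints on nearby $u_j$'s in ways your sketch does not control. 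Until that normalization (or a direct case analysis of where colors $2$ and $3$ can sit) is written out, the parity obstruction you invoke has nothing to apply to, so the proof is incomplete as it stands.

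The paper avoids all of this with one observation you missed: $FSSD_m(C_n)$ contains the cycle $u_1,u_{1,2}^1,u_2,u_{2,3}^1,\ldots,u_n,u_{n,1}^1,u_1$ of length $2n$ as a subgraph. For odd $n$ we have $2n\equiv 2\pmod 4$ and $2n>3$, so $\chi_\rho(C_{2n})=4$ by Proposition~\ref{cycle_1}, and the hereditary property (Proposition~\ref{subgraph}) gives $\chi_\rho(FSSD_m(C_n))\geq\chi_\rho(C_{2n})=4$ at once, for all $m\geq 1$ and all odd $n\geq 3$ (so the separate treatment of $n=3$ via $\omega(C_3)+1$ is not even needed). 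I recommend replacing your lower-bound sketch with this subgraph argument; alternatively, if you insist on the direct route, you must first rule out color $1$ on the $u_i$'s and colors $2,3$ on the subdivided vertices by explicit case analysis rather than by an unproved exchange lemma.
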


\begin{proof}
Let $n \geq 3$ and $m \geq 1$ be arbitrary integers. If $n$ is even, then Proposition \ref{bipartite} implies the result. Otherwise, note that $FSSD_m(C_n)$ contains a subgraph, which is isomorphic to a cycle $C_{2n}$. Since in this case $2n$ is not a multiple of $4$ and $2n \neq 3$, the results from Proposition \ref{subgraph} and Proposition \ref{cycle_1} imply that $\chi_\rho(FSSD_{m}(C_{n})) \geq 4$. To show that $\chi_\rho(FSSD_{m}(C_{n})) \leq 4$, color all vertices $u_{i, j}^k$, $i, j \in \{1, \ldots, n\}$, $i \neq j$, $k \in \{1, \ldots, m \}$, with color $1$ and the consecutive vertices $u_1, u_2, \ldots, u_n$ one after another using the following pattern of colors: $2, 3, 2, 3, \ldots, 2, 3, 4$. Clearly, this is a $4$-packing coloring of $FSSD_m(C_n)$, thus $\chi_\rho(FSSD_{m}(C_{n})) = 4$.
\qed
\end{proof}

%%%%%%%%%%%%%%%%%%%%%%%%%%%%%%%%%%%%%%%%%%%%%%%%%%%%%%%%%%%%%%%%%%%%%%%%%%%%%%%%%%%%%%%%%%%%%%%%%%%%%%%%%%%%%%%%%%%%%%%%%%%%%%%%%%%%%%%%%%%%%%%%%%%%%%%%%%%%%%%%%%%%%%%%%%%%%%%%%%%%%%%%%%%%%%%%%%%%%%%%%%%%%%%%%%%%%%%%%%%%%%%%%%%%%%%%%%%%%%%%%%%%%%%%%%%%%%%%%%%%%%%%%%%%%%%%%%%%%%%%%%%%%%%%%%%%%%%%%%%%%%%%%%%%%%%%%%%%%%%%%%%%%%
%%%%%%%%%%%%%%%%%%%%%%%%%%%%%%%%%%%%%%%%%%%%%%%%%%%%%%%%%%%%%%%%%%%%%%%%%%%%%%%%%%%%%%%%%%%%%%%%%%%%%%%%%%%%%%%%%%%%%%%%%%%%%%%%%%%%%%%%%%%%%%%%%%%%%%%%%%%%%%%%%%%%%%%%%%%%%%%%%%%%%%%%%%%%%%%%%%%%%%%%%%%%%%%%%%%%%%%%%%%%%%%%%%%%%%%%%%%%%%%%%%%%%%%%%%%%%%%%%%%%%%%%%%%%%%%%%%%%%%%%%%%%%%%%%%%%%%%%%%%%%%%%%%%%%%%%%%%%%%%

\section{Finite super subdivision graphs of neighborhood corona graphs}

We continue with determining the packing chromatic numbers of finite super
subdivision graphs of neighborhood corona graphs. Note that neighborhood
corona graphs were defined in Section 2.

Before determining $\chi_\rho(FSSD_m(K_n \star P_p))$ we need the following
lemma. %%%%%%%%%%%%%%%%%%%%%

\begin{lemma}
Let $n\geq 3$, $p\geq 2$ and $m\geq 1$ be arbitrary integers. Then, for any $%
a$-packing coloring $c$ of the graph $FSSD_{m}(K_{n}\star P_{p})$, where $a
\leq n+3$, the following holds:

\begin{enumerate}
\item $c(u_i) \neq 1$ for all $i \in \{1, 2, \ldots, n\}$;

\item $c(v_{i,g}) \neq 1$ for all $i \in \{1, 2, \ldots, n\}$ and $g \in
\{1, 2, \ldots, p\}$.
\end{enumerate}

\label{lemma_color1}
\end{lemma}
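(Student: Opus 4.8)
The plan is to show that if either $c(u_i)=1$ for some $i$, or $c(v_{i,g})=1$ for some $i,g$, then the coloring $c$ is forced to use more than $n+3$ colors, contradicting $a\le n+3$. The key point is that whenever a vertex $w$ receives color $1$, its neighbors in $FSSD_m(K_n\star P_p)$ must all receive pairwise distinct colors from $\{2,3,\ldots\}$, because any two of them lie at distance exactly $2$ (they have the common neighbor $w$, and they are not adjacent since the graph is bipartite between the ``original/corona'' vertices and the subdivided vertices). Hence such a $w$ forces at least $1+\deg(w)$ colors in total. So the whole argument reduces to counting degrees in $FSSD_m(K_n\star P_p)$.

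First I would compute $\deg_{FSSD_m(K_n\star P_p)}(u_i)$. In $K_n\star P_p$ the vertex $u_i$ is adjacent to the $n-1$ vertices $u_j$ ($j\ne i$), to all $p$ vertices $v_{i,g}$ of its own attached path $P_{i,p}$, and—because each $v_{j,g}$ in the $j$-th copy is joined to every neighbour of $u_j$ in $K_n$, i.e. to all $u_i$ with $i\ne j$—to all $p$ vertices $v_{j,g}$ for every $j\ne i$; that is $\deg_{K_n\star P_p}(u_i)=(n-1)+p+(n-1)p=(n-1)(p+1)+p$. Each incident edge is replaced by $K_{2,m}$, so in $FSSD_m$ the degree gets multiplied by $m$: $\deg_{FSSD_m}(u_i)=m\big((n-1)(p+1)+p\big)$. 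For $m\ge 1$, $n\ge 3$, $p\ge 2$ this is at least $(n-1)\cdot 3+2=3n-1$, so $1+\deg(u_i)\ge 3n\ge n+7>n+3$ (using $n\ge 3$, but in fact $3n-1+1=3n>n+3$ already for $n\ge 2$). Thus $c(u_i)=1$ is impossible, giving item~1.

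For item~2, I would compute $\deg_{FSSD_m(K_n\star P_p)}(v_{i,g})$. In $K_n\star P_p$, $v_{i,g}$ is adjacent to its path-neighbours in $P_{i,p}$ ($1$ if $g\in\{1,p\}$, else $2$) and to all $u_j$ with $j\ne i$, i.e. $n-1$ such vertices; so $\deg_{K_n\star P_p}(v_{i,g})\ge (n-1)+1=n$. Multiplying by $m\ge1$ gives $\deg_{FSSD_m}(v_{i,g})\ge n$, whence if $c(v_{i,g})=1$ then $c$ uses at least $1+n$ colors — which by itself is not yet a contradiction with $a\le n+3$. This is the main obstacle: the degree bound alone is too weak for the path-vertices. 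To close the gap I would argue more carefully about the closed second neighbourhood of $v_{i,g}$: if $c(v_{i,g})=1$, the $n-1$ subdivided vertices $s_{j,i,g}^k$ (say $k=1$, one for each $j\ne i$) attached between $v_{i,g}$ and the various $u_j$ must all get distinct colors $\ge 2$; moreover each $u_j$ ($j\ne i$) is at distance $2$ from $v_{i,g}$ and at distance $2$ from each other, so the $u_j$'s together with $v_{i,g}$ and these subdivided vertices force even more colors. Concretely, among $\{v_{i,g}\}\cup\{s_{j,i,g}^1: j\ne i\}\cup\{u_j:j\ne i\}$ one has a set of $1+(n-1)+(n-1)=2n-1$ vertices that are pairwise at distance $\le 2$ except that an $s$-vertex and a $u$-vertex may coincide in adjacency — I would check that in fact all pairs here are within distance $2$, so they need $2n-1$ distinct colors, and $2n-1\ge n+2$; pushing slightly further (e.g. also using a path-neighbour of $v_{i,g}$, which is at distance $2$ from all the $u_j$ and from all but one of the $s$-vertices) yields $\ge n+4>n+3$ colors, the desired contradiction.

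I expect the bookkeeping in item~2 — correctly identifying a large clique-like set in the square of the graph and verifying all the pairwise distances are $\le 2$ (equivalently, that no two of the chosen vertices are ``too far'') — to be the delicate part; item~1 is a routine degree count. Once both contradictions are in place, the lemma follows. \qed
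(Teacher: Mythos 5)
Part~1 of your argument is essentially the paper's: a neighbor of a $1$-colored vertex forces pairwise distinct colors $\ge 2$ on its (pairwise distance-$2$) neighborhood, and the degree of $u_i$ is large enough to exceed $n+3$. One correction: in the neighborhood corona $K_n\star P_p$, the $i$-th copy of $P_p$ is joined to the \emph{neighbors} of $u_i$, not to $u_i$ itself, so $u_i$ is not adjacent to its ``own'' path; the correct degree is $m(n-1)(p+1)\ge 3n-3$, which still gives $3n-2>n+3$ for $n\ge 3$, so the conclusion survives your miscount.

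Part~2, however, has a genuine gap, and it sits exactly where you flagged uncertainty. Your proposed set $\{v_{i,g}\}\cup\{s^1_{j,i,g}:j\ne i\}\cup\{u_j:j\ne i\}$ is \emph{not} pairwise within distance $2$: the only neighbors of $s^1_{j,i,g}$ are $u_j$ and $v_{i,g}$, and neither is adjacent to $u_{j'}$ for $j'\ne j$ (every original adjacency has been replaced by a path of length $2$), so $d(s^1_{j,i,g},u_{j'})=3$. Consequently color $2$ can be reused between an $s$-vertex and a $u$-vertex, and the best this set yields is roughly $2n-2$ colors, which exceeds $n+3$ only for $n\ge 6$. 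The auxiliary claim that a path-neighbor of $v_{i,g}$ is at distance $2$ from ``all but one'' of the $s$-vertices is also false (those distances are $3$). The paper's proof reflects this difficulty: for $m\ge 2$ the degree bound $2n$ suffices, and for $m\ge 1$ with $n\ge 5$ one can argue that at least three of the vertices $s^1_{j,i,g}$ receive colors $\ge 3$ that cannot be reused on any $u_{j'}$, which together with $\chi_\rho(FSSD_m(K_n))=n+1$ forces $n+4$ colors; but the remaining cases $m=1$ with $n=4$ and $n=3$ require a substantial explicit case analysis (five sub-cases for $n=3$) that no counting argument of the type you sketch replaces. As written, your proposal does not prove item~2 for $n\in\{3,4,5\}$ when $m=1$, which are precisely the hard cases.
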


\begin{proof}
Let $n\geq 3$, $p\geq 2$, $m\geq 1$ be arbitrary integers, and let $c$ be an arbitrary $a$-packing coloring of $FSSD_{m}(K_{n}\star P_{p})$, where $a \leq n+3$.

First we prove that $c(u_i) \neq 1$ for all $i \in \{1, 2, \ldots, n\}$. Suppose to the contrary that $c(u_i)=1$ for some $i \in \{1, \ldots, n\}$ and without loss of generality, assume that $i=1$. 
Note that $u_1$ has $m(n-1)+m(n-1)p$ neighbors. Since $m \geq 1$ and $p \geq 2$, we derive that $u_1$ has actually at least $3n-3$ neighbors, which are pairwise at distance $2$ and hence $c$ assigns them pairwise different colors from $\{2, 3, \ldots \}$. Therefore, $c$ uses at least $3n-3+1=3n-2$ colors, what is more than $a$, a contradiction to $c$ being an $a$-packing coloring. Hence, $c(u_i) \neq 1$ for all $i \in \{1, \ldots, n\}$ and without loss of generality, we may assume that $c(u_{i,j}^k)=1$ for all $i, j \in \{1, \ldots, n\}$, $i \neq j$, and $k \in \{1, \ldots, m\}$.  

% 2. del 
Next, we  prove that $c(v_{i,g}) \neq 1$ for all $i \in \{1, 2, \ldots, n\}$ and $g \in \{1, 2, \ldots, p\}$. Again, suppose to the contrary that $c(v_{i,g}) = 1$ for some $i$ and $g$, say $c(v_{2,1}) = 1$. Note that $v_{2,1}$ has at least $mn$ neighbors. If $m \geq 2$, then $deg(v_{2,1}) \geq 2n$ and hence $c$ uses at least $2n+1$ different colors, which is more than $a$, a contradiction. 
Therefore, in the remainder of the proof, we only need to consider the case when $m=1$.
We distinguish three cases with respect to $n$. 

\textbf{Case $1$.} $n \geq 5$. \\
In this case, $c$ assigns to (at least) four vertices $s_{i,2,1}^{1}$, $i \in \{1, 3, 4, \ldots, n\}$ pairwise different colors from $\{2, 3, 4, \ldots\}$, and note that at least three of the mentioned vertices receive colors from  $\{3, 4, \ldots \}$ by $c$. Since these colors cannot be used for packing coloring of the vertices from $\{u_1, u_2, \ldots, u_n\}$, Corollary~\ref{Cor} implies that $c$ uses at least $n+4$ colors, a contradiction. 

\textbf{Case 2.} $n=4$. \\
Vertices $s_{1,2,1}^{1}$, $s_{3,2,1}^{1}$, $s_{4,2,1}^{1}$ and $v_{2,1,2}^{1}$ receive four different colors by $c$. Note that, if  $c(s_{1,2,1}^{1})$, $c(s_{3,2,1}^{1})$, $c(s_{4,2,1}^{1}) \in \{3, 4, \ldots \}$, then by the same consideration as above follows a contradiction. Therefore, $c(s_{i,2,1}^{1})=2$ for some $i \in \{1,3,4\}$, say $i=1$. 
If $c(v_{2,1,2}^{1})=3$, then by using Corollary~\ref{Cor}, we infer that $c$ uses at least $n+3$ colors for a packing coloring of the subgraph of $FSSD_{1}(K_{n}\star P_{p})$, which is induced by the set of vertices $V(FFSD_1(K_4)) \cup \{s_{1,2,1}^{1}, s_{3,2,1}^{1}, s_{4,2,1}^{1}, v_{2,1}, v_{2,1,2}^{1} \}$. Since $a \leq n+3$, the colors 2 and 3 must be used for a packing coloring of the vertices from $\{u_1, u_2, u_3, u_4\}$, which implies $c(u_2)=3$. 
But then it is easy to check that there is no available colors for at least one of the vertices $v_{2,2}$, $s_{1,2,2}^{1}$, $s_{3,2,2}^{1}$ or $s_{4,2,2}^{1}$ (note that at least one of them cannot receive a color $1$). The same result follows if $c(v_{2,1,2}^{1})=4$. In the case when $c(v_{2,1,2}^{1}) \geq 5$, colors $c(v_{2,1,2}^{1})$, $c(s_{3,2,1}^{1})$ and $c(s_{4,2,1}^{1})$ cannot be used by $c$ for packing coloring of the vertices of subgraph isomorphic to $FSSD_1(K_4)$, thus by Corollary~\ref{Cor}, $c$ uses more than $a$ colors, a contradiction. 

\textbf{Case 3.} $n=3$. \\
Recall that $c$ is an $a$-packing coloring of $FSSD_{1}(K_{3}\star P_{p})$ and in this case $a \leq 6$. 
Since $c(u_i) \neq 1$ for all $i \in \{1,2,3\}$, let be $c(u_1)=b$, $c(u_2)=c$ and $c(u_3)=d$, where $b,c,d$ are three pairwise distinct colors from $\{2,3, \ldots a\}$. 
We consider five sub-cases with respect to the colors $b,c,d$.

%First we show that in each situation $c$ uses $6$ colors for the packing coloring of a subgraph induced by the set of vertices $u_1,u_2,u_3, u_{1,2}, u_{1,3}, u_{2,3}, v_{3,1}$, $v_{3,2}, v_{3,1,2}^{1}$, $s_{1,3,1}^{1}$, $s_{1,3,2}^{1}$, $s_{2,3,1}^{1}$, $s_{2,3,2}^{1}$ and then we prove that $c$ uses more than $7$ colors for a packing coloring of the whole graph. 

\textbf{Case 3.1.}
$b=2.$ \\
Vertex $s_{1,2,1}^{1}$ can be colored only with color $e \in \{3,4, \ldots, a\}$ by $c$, where $e \neq c,d$, and  $s_{3,2,1}^{1}$ can receive one of the colors from $\{2, f\}$ by $c$, where $f \in \{3,4, \ldots, a\}$, $f \neq c,d,e$. 

If $c(s_{3,2,1}^{1})=2$, then $c(v_{2,1,2}^{1}) \in \{f, c\}$. The first case implies that $c(v_{2,2})=c=3$. 
The second case yields that $c \leq 4$ and $c(v_{2,2}) \in \{1, f\}$; if $1$ is used, then $c(s_{1,2,2}^{1})=f$ and $c(s_{3,2,2}^{1})=e=3$.

If $c(s_{3,2,1}^{1})=f$, then $c(v_{2,2}) \in \{1,c\}$; the first case implies that $f=3$ ($c(s_{1,2,2}^{1})=f=3)$, and the second that $c=3$. 

Next, consider the vertices $s_{1,3,1}^{1}$, $s_{1,3,2}^{1}$, $v_{3,1}$ and $v_{3,2}$. It is easy to observe that in each case there is no available colors by $c$ for at least one of the mentioned vertices, a contradiction to $c$ being an $a$-packing coloring.

\textbf{Case 3.2.} $c=2$ \\
In this case $\{c(s_{1,2,1}^{1}), c(s_{3,2,1}^{1}), c(v_{2,1,2}^{1})\} = \{2, e, f\}$, $e,f \in \{3,  \ldots, a\}$, $e \neq f$, $e \neq b, d$, $f \neq b,d$.

First, suppose that $c(v_{2,1,2}^{1})=2$ (and $c(s_{1,2,1}^{1})=e$, $c(s_{3,2,1}^{1})=f$). This implies that $c(v_{2,2})=1$ and $c(s_{1,2,2}^{1})=f=3$, but then there is no available color for the vertex $s_{3,2,2}^{1}$, which yields a contradiction to $c$ being an $a$-packing coloring. 
Therefore $c(v_{2,1,2}^{1}) \neq 2$ and without loss of generality we may assume that $c(s_{1,2,1}^{1})=2$ (and $c(s_{3,2,1}^{1})=e$, $c(v_{2,1,2}^{1})=f$). Next distinguish three posibilities with respect to the colors $c(v_{1,1})$ and $c(v_{1,2})$.

\textbf{Case 3.2.1.} $c(v_{1,1}), c(v_{1,2}) \neq 1$. \\
The only possibility is that the vertices $v_{1,1}$ and $v_{1,2}$ receive colors $3=b$ and $4=f$ by $c$, which implies $c(v_{3,1})=c(v_{3,2})=1$. But then there is no available colors for $s_{2,3,2}^{1}$, a contradiction to $c$ being an $a$-packing coloring.

\textbf{Case 3.2.2.} $c(v_{1,1})=1, c(v_{1,2}) \neq 1$ (the proof in the case when $c(v_{1,1}) \neq 1, c(v_{1,2}) = 1$ is analogous). \\
Vertex $s_{2,1,1}^{1}$ can receive color $e$, if $e=3$, or color $f$, if $f \in \{3,4,5\}$, by $c$.

If $c(s_{2,1,1}^{1})=3$ (i.e. either $e=3$ or $f=3$), then $c(v_{1,2})=f=4$ (and therefore $c(s_{2,1,1}^{1})=e=3$), so $v_{3,1}$ and $v_{3,2}$ can receive only color $1$ by $c$, but then we have the same contradiction as in situation a. The case when $c(s_{2,1,1}^{1})=4=f$ yields $c(v_{1,2})=3=b$ and again, the vertices $v_{3,1}$ and $v_{3,2}$ can receive only color $1$ by $c$ (note that $d,e \geq 5$), a contradiction. Therefore, $c(s_{2,1,1}^{1})=5=f$ and then $c(v_{1,2})=3=b$, which yields that the vertices $v_{3,1}$ and $v_{3,2}$ can receive only colors $1$ and $e=4$ by $c$. In each case it is impossible to color all of the vertices from $N(v_{3,1}) \cup N(v_{3,2})$ by $c$, a contradiction to $c$ being an $a$-packing coloring.

\textbf{Case 3.2.3.} $c(v_{1,1})=c(v_{1,2})=1$. \\
This implies that $c(s_{3,1,1}^{1})$ is $2$ or $f=3$. If $c(s_{3,1,1}^{1})=f=3$, then there is no available colors for $s_{2,1,1}^{1}$. Thus $c(s_{3,1,1}^{1})=2$, which implies $c(s_{3,1,2}^{1})=f=3$, but then there is no available colors for $s_{2,1,2}^{1}$, a contradiction to $c$ being an $a$-packing coloring.

\textbf{Case 3.3.} $b=3$ and $c,d \neq 2$ (i.e. $c,d \in \{4, \ldots, a\}$). \\
In this case $\{c(s_{1,2,1}^{1}), c(s_{3,2,1}^{1})\} = \{2,e\}$, where $e \in \{4, \ldots, a\}, e \neq c,d$. First, consider the case when $c(s_{1,2,1}^{1})=2$ and $c(s_{3,2,1}^{1})=e$. We derive that $c(s_{1,3,1}^{1})=c(s_{1,3,2}^{1})=1$, hence the vertices $v_{3,1}$ and $v_{3,2}$ receive color $2$ and color $e=4$ (thus, $c \neq 4$). But then, there is no available color for $v_{2,1,2}^{1}$, a contradiction. 
If $c(s_{1,2,1}^{1})=e$ and $c(s_{3,2,1}^{1})=2$, then the vertices $s_{1,3,1}^{1}$, $s_{1,3,2}^{1}$, $s_{2,3,1}^{1}$, $s_{2,3,2}^{1}$, $v_{3,1}$ and $v_{3,2}$ receive the colors from $\{1,2\}$. But since it is impossible to color all of the mentioned vertices with these two colors, we have a contradiction to $c$ being a packing coloring. 

\textbf{Case 3.4.} $c=3$ and $b,d \neq 2$ (i.e. $b,d \in \{4, \ldots, a\})$. \\
This yields that $\{c(s_{1,2,1}^{1}), c(s_{3,2,1}^{1})\}=\{2,e\}$, where $e \in \{4, \ldots, a\}, e \neq b,d$.
First assume that $c(s_{1,2,1}^{1})=2$ and $c(s_{3,2,1}^{1})=e$. The vertices $s_{3,1,1}^{1}$, $s_{2,1,1}^{1}$, $s_{2,1,2}^{1}$, $s_{3,1,2}^{1}$, $v_{1,1}$ and $v_{1,2}$ receive colors $1$ and $2$. Again, it is impossible to color all of the mentioned vertices only with two colors, thus we have a contradiction. 
If $c(s_{1,2,1}^{1})=e$ and $c(s_{3,2,1}^{1})=2$, then by considering the vertices $s_{2,3,1}^{1}$, $s_{1,3,1}^{1}$, $s_{2,3,2}^{1}$, $s_{1,3,2}^{1}$, $v_{3,1}$ and $v_{3,2}$ by analogous consideration as above follows a contradiction. 

\textbf{Case 3.5.} $\{b,c,d\} = \{4,5,6\}$. \\
Vertices $s_{1,2,1}^{1}$ and $s_{3,2,1}^{1}$ can receive only colors $2$ and $3$, thus $c(v_{2,1,2}^{1})=c=4$. Without loss of generality, we may assume that $c(s_{1,2,1}^{1})=2$ and $c(s_{3,2,1}^{1})=3$. Then the vertices $s_{3,1,1}^{1}$ and $s_{3,1,2}^{1}$ can get only colors $1$ or $2$, so at least one of them is colored by $1$, say $s_{3,1,1}^{1}$. This yields that $c(v_{1,1})=2$, $c(v_{1,2})=1$ and $c(v_{1,1,2}^{1})=3$, but then there is no available colors for $s_{2,1,2}^{1}$ and $s_{3,1,2}^{1}$ (only one of them can get color $2$), a contradiction to $c$ being an $a$-packing coloring.  

By symmetry, we can switch the role of the colors $b$ and $d$, and derive that our claim holds.
\qed
\end{proof}

\begin{theorem}
If $n\geq 3$, $p\geq 2$ and $m\geq 1$, then 
\begin{equation*}
\chi_\rho(FSSD_{m}(K_{n}\star P_{p}))=n+3.
\end{equation*}
\label{theorem_kn}
\end{theorem}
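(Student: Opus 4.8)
The plan is to establish the two bounds $\chi_\rho(FSSD_m(K_n\star P_p))\le n+3$ and $\chi_\rho(FSSD_m(K_n\star P_p))\ge n+3$ separately; the upper bound comes from an explicit coloring, while the lower bound combines Lemma~\ref{lemma_color1} with a few distance computations in $FSSD_m(K_n\star P_p)$.

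For the upper bound, I would define a coloring $c$ as follows: assign color $1$ to every subdivided vertex (all $u_{i,j}^k$, all $v_{i,g,h}^k$ and all $s_{j,i,g}^k$); put $c(v_{i,g})=2$ when $g$ is odd and $c(v_{i,g})=3$ when $g$ is even; and set $c(u_i)=i+3$ for $i\in\{1,\ldots,n\}$, so that colors $4,5,\ldots,n+3$ are each used exactly once. In any finite super subdivision no two subdivided vertices are adjacent, so the set of all subdivided vertices is independent and color $1$ is a legitimate $1$-packing; colors $\ge 4$ are trivially legitimate, so the only real point is that colors $2$ and $3$ are legitimate. Two distinct vertices $v_{i,g}$ and $v_{i',g'}$ carrying the same color lie either in different copies $P_{i,p},P_{i',p}$ or in the same copy with $|g-g'|\ge 2$, and in both cases their distance in $FSSD_m(K_n\star P_p)$ equals $4$ (there is no common neighbour and no path of length $3$, while a path of length $4$ always exists), which is $>2$ and $>3$. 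Hence $c$ is an $(n+3)$-packing coloring.

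For the lower bound, suppose for contradiction that $c$ is an $(n+2)$-packing coloring. Since $n+2\le n+3$, Lemma~\ref{lemma_color1} gives $c(u_i)\ne 1$ for all $i$ and $c(v_{i,g})\ne 1$ for all $i,g$. The vertices $u_1,\ldots,u_n$ are pairwise at distance $2$, so they carry $n$ distinct colors, all from $\{2,\ldots,n+2\}$; let $\alpha$ be the unique element of $\{2,\ldots,n+2\}$ used on no $u_i$. For any $i,g$, the vertex $v_{i,g}$ is at distance $2$ from every $u_j$ with $j\ne i$, so $c(v_{i,g})\notin\{c(u_j):j\ne i\}$, and combined with $c(v_{i,g})\ne 1$ this forces $c(v_{i,g})\in\{c(u_i),\alpha\}$. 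Since $p\ge 2$, the vertices $v_{i,1}$ and $v_{i,2}$ exist and are at distance $2$, hence receive different colors; therefore $\{c(v_{i,1}),c(v_{i,2})\}=\{c(u_i),\alpha\}$, so one of $v_{i,1},v_{i,2}$, say $v_{i,g_0}$, satisfies $c(v_{i,g_0})=c(u_i)$.

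The step that closes the argument is the distance computation $d(u_i,v_{i,g_0})=4$ in $FSSD_m(K_n\star P_p)$: these two vertices are non-adjacent, share no neighbour, and are joined by no path of length $3$ (such a path would force two subdivided vertices to be adjacent), whereas $u_i,u_{i,j}^k,u_j,s_{j,i,g_0}^{k'},v_{i,g_0}$ is a path of length $4$ for any $j\ne i$. Since $c(u_i)=c(v_{i,g_0})$ and $d(u_i,v_{i,g_0})=4$, the packing condition forces $c(u_i)\le 3$. As this holds for every $i$, the colors $c(u_1),\ldots,c(u_n)$ would be $n$ pairwise distinct values in $\{2,3\}$, impossible for $n\ge 3$; this contradiction gives $\chi_\rho(FSSD_m(K_n\star P_p))\ge n+3$, and with the upper bound this finishes the proof. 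I expect the only delicate part to be the distance bookkeeping in $FSSD_m(K_n\star P_p)$ --- especially verifying $d(u_i,v_{i,g})=4$ and $d(v_{i,g},v_{i',g'})=4$ --- which is elementary but requires a careful case analysis over the three types of subdivided vertices and the fact that they are mutually non-adjacent; everything else is routine once Lemma~\ref{lemma_color1} is available.
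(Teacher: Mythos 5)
Your proof is correct, and while it shares the paper's skeleton (the same explicit $(n+3)$-coloring for the upper bound, and Lemma~\ref{lemma_color1} as the engine of the lower bound), the endgame of your lower bound is genuinely different from, and in my view tighter than, the paper's. The paper works in $FSSD_m(K_n\star P_2)$, observes that the non-subdivided vertices are pairwise at distance at most $4$ so each color $\ge 4$ appears on at most one of them, and then argues that an \emph{optimal} coloring would not place colors $2$ or $3$ on any $u_i$ because doing so caps how often those colors can be reused on the $v_{i,g}$; this exchange-style step (``$c$ assigns colors $2$ and $3$ to as many vertices as possible'') is the least rigorous point of the published argument. You instead assume an $(n+2)$-packing coloring outright, note that the $u_i$ occupy all but one color $\alpha$ of $\{2,\ldots,n+2\}$, deduce $c(v_{i,g})\in\{c(u_i),\alpha\}$ from $d(v_{i,g},u_j)=2$ for $j\ne i$, force $c(v_{i,g_0})=c(u_i)$ for some $g_0\in\{1,2\}$ via $d(v_{i,1},v_{i,2})=2$, and then use $d(u_i,v_{i,g_0})=4$ to conclude $c(u_i)\in\{2,3\}$ for every $i$, which is absurd for $n\ge 3$. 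This replaces the paper's optimality heuristic with a direct pigeonhole contradiction and needs no appeal to what an optimal coloring ``would'' do. All the distance claims you flag as delicate ($d(u_i,v_{i,g})=4$, $d(v_{i,g},v_{i',g'})=4$ in the relevant cases) do check out, most easily by noting that $FSSD_m(G)$ is bipartite between original and subdivided vertices, so distances between original vertices are even and equal $2$ exactly when the vertices are adjacent in $G$.
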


\begin{proof}
In order to show that $\chi_\rho(FSSD_{m}(K_{n}\star P_{p})) \leq n+3$ holds for any $n\geq 3$, $p\geq 2$ and $m\geq 1$, we form a $(n+3)$-packing coloring of $FSSD_{m}(K_{n}\star P_{p})$. 
First, color the vertices $u_{i,j}^k$, $v_{i, g, h}^k$ and $s_{i, j, g}^k$ for all $i, j \in \{1, 2, \ldots, n\}$, $i \neq j$,  $g, h \in \{1, 2, \ldots, p\}$, $g \neq h$ and $k \in \{1, \ldots, m\}$, with color $1$.
Then color the uncolored vertices, which correspond to the vertices from $V(P_{i,p})$, $i \in \{1, \ldots, n\}$, one after another with the following pattern of colors: $2, 3, 2, 3, \ldots$ 
Finally, color the vertices $u_1, u_2, \ldots, u_n$ with $n$ new colors.  
It is easy to check that the described coloring is $(n+3)$-packing coloring of a given graph, thus $\chi_\rho(FSSD_{m}(K_{n}\star P_{p})) \leq n+3$.

Next, prove that $\chi_\rho(FSSD_{m}(K_{n}\star P_{p})) \geq n+3$ holds for any $n\geq 3$, $p\geq 2$ and $m\geq 1$. First, consider a graph $FSSD_m(K_{n}\star P_{2})$, where $n \geq 3$ and $m \geq 1$ are arbitrary integers. Denote by $c$ an optimal packing coloring of this graph. Note that $c$ is an $a$-packing coloring, where $a \leq n+3$. 
Using Lemma \ref{lemma_color1} we infer that $c(u_i) \neq 1$ and $c(v_{i,g}) \neq 1$ for all $i \in \{1, 2, \ldots, n\}$ and $g \in \{1, 2\}$.
Let be $A=\{u_i; 1 \leq i \leq n\} \cup \{v_{i,g}; 1 \leq i \leq n, 1 \leq g \leq 2\}$. Note that all vertices from $V(FSSD_m(K_{n}\star P_{2})) \setminus A$ (can) receive color $1$ by $c$ and all vertices from $A$ receive colors from $\{2, 3, \ldots, a\}$ by $c$. Since the vertices from $A$ are pairwise at distance at most $4$, we have: $|c^{-1}(i) \cap A|=1$ for all $i \in \{4, 5, \ldots, a\}$. In other words, for each color $i \in \{4, 5, \ldots, a\}$, there is only one vertex from $A$, colored by $i$. Then, since $c$ is an optimal packing coloring of $FSSD_m(K_{n}\star P_{2})$, it assigns colors $2$ and $3$ to at most as possible vertices. 
If there exists $i \in \{1, \ldots, n\}$ such that $c(u_i)=2$ (respectively $c(u_i)=3$), then $|c^{-1}(2) \cap A| \leq 2$ (respectively  $|c^{-1}(3) \cap A| \leq 2 $).
Otherwise, $c$ can assign a color $2$ (respectively $3$) to at most $n$ vertices (one vertex in each $P_2$ can be colored with color $2$), which is more than $2$. 
Therefore, $c(u_i) \neq 2$ and $c(u_i) \neq 3$ (and recall that $c(u_i) \neq 1$) for all $i \in \{1, 2, \ldots, n\}$, which implies that $c$ uses at least $n+3$ colors. Hence, $\chi_\rho(FSSD_{m}(K_{n}\star P_{2})) \geq n+3$. Furthermore, since $FSSD_{m}(K_{n}\star P_{p})$, $m \geq 1, n \geq 3, p \geq 2$, contains a subgraph isomorphic to $FSSD_{m}(K_{n}\star P_{2})$, $\chi_\rho(FSSD_{m}(K_{n}\star P_{p})) \geq \chi_\rho(FSSD_{m}(K_{n}\star P_{2})) \geq n+3$, what completes the proof. 
\qed
\end{proof}

In Fig. \ref{fig:8} is shown a graph $FSSD_{m}(K_{3}\star P_{2})$ and its
packing coloring, as is described in the proof of the previous theorem. Note
that all unlabeled vertices of presented graph receive a color $1$.

\begin{figure}[h]
\centering
\includegraphics[scale=1]{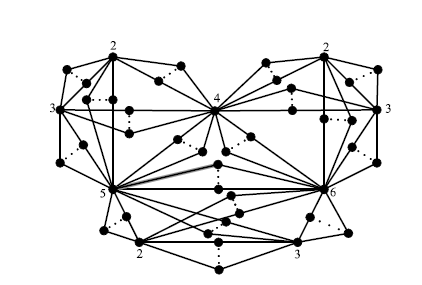} 
\caption{$FSSD_m(K_3 \star P_2)$ and a packing coloring of this graph}
\label{fig:8}
\end{figure}

%%%%%%%%%%%%%%%%%%%%%%%%%%%%%%%%%%%%%%%%%%%%%%%%%%%%%%%%%%%%%%%%%%%%%%%%%%%%%%%%%%%%%%%%%%%%%%%%%%%%%%%%%%%%%%%%%%%%%%%%%%%%%%%%%%%%%%%%%%%%%%%%%%%%%%%%%%%%%%%%%%%%%%%%%%%%%%%%%%%%%%%%%%%%%%%%%%%%%%%%%%%%%%%%%%%%%%%%%%%%%%%%%%%%%%%%%%%%%%%%%%%%%%%%%%%%%%%%%%%%%%
We continue this section with determining the packing chromatic numbers of
graphs $FSSD_{m}(C_{n}\star P_{p})$, $n\geq 3,p\geq 2$. While in the case of
graphs $FSSD_{m}(K_{n}\star P_{p})$, we provided the exact values of their
packing chromatic numbers, in the case of cycles the task gets much harder
for us, hence we present only the upper bound for $\chi _{\rho
}(FSSD_{m}(C_{n}\star P_{p}))$.

\begin{theorem}
If $n\geq 3$, $m\geq 1$ and $p\geq 2$, then

\begin{equation*}
\chi _{\rho }(FSSD_{m}(C_{n}\star P_{p}))\leq \left\{ 
\begin{array}{ll}
6; & n=3, \\ 
7; & n\geq 4,n\notin \{5,7,11\}, \\ 
8; & n\in \{5,7,11\}.%
\end{array}%
\right.
\end{equation*}%
\label{TH_cycle}
\end{theorem}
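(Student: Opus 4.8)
The plan is to exhibit, for every $n$, an explicit packing coloring of $FSSD_{m}(C_{n}\star P_{p})$ that uses exactly the stated number of colors; the construction will be uniform in $m\ge 1$ and $p\ge 2$. First I would assign color $1$ to all subdivided vertices and color each path $P_{i,p}$ by the alternating pattern $c(v_{i,g})=2$ for $g$ odd and $c(v_{i,g})=3$ for $g$ even, reserving the colors from $\{4,5,6,7\}$ (only $\{4,5,6\}$ when $n=3$, and $\{4,5,6,7,8\}$ when $n\in\{5,7,11\}$) for the cycle vertices $u_{1},\dots,u_{n}$. All subdivided vertices are pairwise at distance at least $2$, so color $1$ is a $1$-packing. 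For colors $2$ and $3$ one checks that two vertices carrying the same color are always at distance at least $4$: if $v_{i,g}$ and $v_{i,g'}$ lie in one path then $|g-g'|\ge 2$ and the detour $v_{i,g}\to u_{i-1}\to v_{i,g'}$, each of whose two legs passes through a single subdivided vertex, has length $4$; and for $i\ne j$ a direct check gives $d(v_{i,g},v_{j,h})\ge 4$. Hence colors $2$ and $3$ are a $2$-packing and a $3$-packing.

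The second step handles the cycle vertices. The crucial fact is $d_{FSSD_{m}(C_{n}\star P_{p})}(u_{i},u_{j})=2\,d_{C_{n}}(i,j)$: a path from $u_{i}$ to $u_{j}$ in $FSSD_{m}$ alternates between original and subdivided vertices, so the original vertices it meets trace a walk of half its length in $C_{n}\star P_{p}$, and $d_{C_{n}\star P_{p}}(u_{i},u_{j})=d_{C_{n}}(i,j)$ because passing through the corona copies of $P_{p}$ gives no shortcut between two $u$'s. Therefore a color $c$ may be repeated on $u_{i}$ and $u_{j}$ exactly when $2\,d_{C_{n}}(i,j)>c$, i.e.\ $d_{C_{n}}(i,j)\ge 3$ for $c\in\{4,5\}$, $d_{C_{n}}(i,j)\ge 4$ for $c\in\{6,7\}$, and $d_{C_{n}}(i,j)\ge 5$ for $c=8$; and since the $u_{i}$'s use only colors $\ge 4$ while every other vertex uses a color $\le 3$, no conflict can arise between the $u_{i}$'s and the rest of the graph. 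So the theorem reduces to a combinatorial statement about the cyclic group $\ZZ_{n}$: (a) when $n=3$, assign $u_{1},u_{2},u_{3}$ the three distinct colors $4,5,6$; (b) when $n\ge 4$ and $n\notin\{5,7,11\}$, partition $\ZZ_{n}$ into two parts in which all elements are pairwise at cyclic distance $\ge 3$ and two parts in which all elements are pairwise at cyclic distance $\ge 4$; (c) when $n\in\{5,7,11\}$, do the same but with one additional part of pairwise cyclic distance $\ge 5$ allowed.

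For (b) and (c) I would write down explicit (nearly periodic) patterns. When $6\mid n$, repeating the block $4,5,6,4,5,7$ works: colors $4$ and $5$ occupy the positions congruent to $\{0,3\}$ and to $\{1,4\}$ modulo $6$ (within-class cyclic gaps all equal to $3$), and colors $6$ and $7$ occupy the positions congruent to $2$ and to $5$ modulo $6$ (within-class cyclic gaps all equal to $6$). For the remaining $n$ one proceeds by a case analysis modulo $12$, treating the small values individually and modifying the pattern near the seam of the cycle; the eighth color is forced for $n\in\{5,7,11\}$ precisely because $2\lfloor n/3\rfloor+2\lfloor n/4\rfloor<n$ there, so four colors cannot cover all $n$ vertices of $C_{n}$ under the constraints above. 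Combining colors $1,2,3$ with the colors placed on the $u_{i}$'s then yields a packing coloring with $6$ colors for $n=3$, with $7$ colors for $n\ge 4$, $n\notin\{5,7,11\}$, and with $8$ colors for $n\in\{5,7,11\}$, which is the asserted bound.

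The step I expect to be the main obstacle is the case analysis in (b). The counting inequality $2\lfloor n/3\rfloor+2\lfloor n/4\rfloor\ge n$ (true for every $n\ge 4$ apart from $5,7,11$) shows there is enough room, but producing a partition that actually closes up correctly around the cycle is delicate; for instance the naive residue-class partition already fails at $n=9$, where one instead takes $\{0,3,6\}$ for color $4$ together with the distance-$4$ matching $\{1,5\},\{2,7\},\{4,8\}$ for colors $5,6,7$. Checking all residues, and verifying that the exceptional set is exactly $\{5,7,11\}$, is where the real work lies.
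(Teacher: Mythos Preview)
Your approach is essentially the paper's: color all subdivided vertices $1$, alternate $2,3$ along each $P_{i,p}$, and reduce the problem for the $u_i$'s to a cyclic packing on $\ZZ_n$ with gap constraints $\ge 3$ for colors $4,5$ and $\ge 4$ for colors $6,7$. The paper then completes exactly the case analysis you flag as the main obstacle, but modulo $6$ rather than modulo $12$: it repeats the block $456\,457$ when $6\mid n$ (your pattern), and for the other residues it gives explicit strings that start and end with suitable ``seam'' segments wrapped around the basic block, together with ad~hoc patterns for the small values $n\in\{4,5,7,8,9,11\}$ (e.g.\ $4657\,45675$ for $n=9$, and $4564578$, $45678$, $75465745648$ for $n=7,5,11$); so nothing is left implicit, and in particular your counting heuristic $2\lfloor n/3\rfloor+2\lfloor n/4\rfloor$ is not invoked.
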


\begin{proof}
In the case when $n=3$ the results clearly holds, since 
$C_3$ is isomorphic to $K_3$ and thus $FSSD_m(K_3 \star P_p)$ is isomorphic to $FSSD_m(C_3 \star P_p)$ for any $m \geq 1$ and $p \geq 2$. Thus, Theorem \ref{theorem_kn} yields the result.

In order to prove the desired bounds in the case when $n \geq 4$, we form a packing coloring $c$ of a given graph $FSSD_{m}(C_{n} \star P_{p})$.
First, let be $c(u_{i,j}^k)=c(v_{i,g,h}^k)=c(s_{j,i,g}^k)=1$ for all $i,j \in \{1, \ldots, n\}$, $i \neq j$, $g, h\in \{1, \ldots, p\}$, $g \neq h$, $k \in \{1, \ldots, m\}$. Then, for each $i \in [n]$ color the vertices from $\{v_{i,g}$; $1 \leq g \leq p\}$ one after another with the following sequence of colors: $2, 3, 2, 3, \ldots$. The remaining vertices of $G$ (i.e. the vertices $u_i$) color one after another using the following pattern of colors.

\textbf{Case 1.} $n=4, n=5$. \\
In the case when $n=4$ use the pattern $4567$, and in the  case when $n=5$, color the vertices $u_i$ with color pattern $45678$.

\textbf{Case 2.} $n \cong 0$ (mod 6). \\
Use the color pattern $456 457$ for the packing coloring of the vertices $u_i$. 

\textbf{Case 3.} $n \cong 1$ (mod 6). \\
In this case start the coloring of the consecutive vertices with the colors $754 657 456 7456$ and repeat the pattern $457 456$. If $n=13$, then the repeated block is omitted, and if $n=7$, then assign to the vertices $u_i, i \in [7]$, the colors of the following pattern: $4564578$. 

\textbf{Case 4.} $n \cong 2$ (mod 6). \\
Color the vertices one after another using this pattern of colors $754 657 456 74564$ and repeat the pattern $754 654$, if necessary. If $n=8$, then use the color pattern: $75467456$.

\textbf{Case 5.} $n \cong 3$ (mod 6). \\
When $n \cong 3 $(mod 6), start the coloring of the vertices $u_i$ with the colors $4657$, repeat the block $456 457$ and end by $456 75$. Note that, the repeated block is omitted in the case when $n=9$ (see Fig. \ref{fig:3}). 
 
\textbf{Case 6.} $n \cong 4$ (mod 6). \\
In this case repeat the sequence of colors $456 457$ and end by $4567$. 

\textbf{Case 7.} $n \cong 5$ (mod 6). \\
Start the coloring with the patten $754 657 456$, repeat the block $457 456$ and  end by $754 67546$. Note that the repeated block can be omitted. In the case when $n=11$, use the colors $75465745648$.

Since in each case the described coloring is a packing coloring of a given graph, the proof is completed.
\qed
\end{proof}

\begin{figure}[h]
\centering
\includegraphics[scale=1]{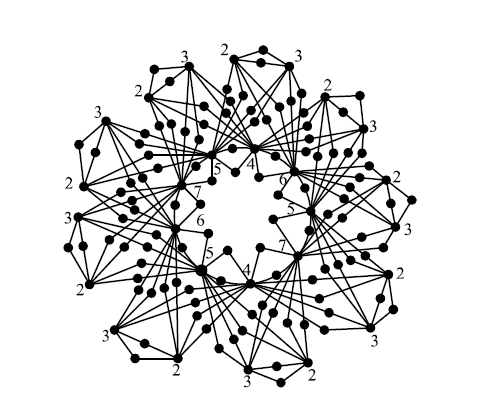} 
\caption{$FSSD_2(C_9 \star P_2)$ and a packing coloring of this graph}
\label{fig:3}
\end{figure}

In the sequel of this section, we determine the packing chromatic numbers of
graphs $FSSD_{m}(K_{n}\star P_{p})$ and $FSSD_{m}(C_{n}\star P_{p})$, when $%
p=1$. In other words, we consider so called splitting graph.

\begin{proposition}
If $n\geq 3$ and $m\geq 1$, then 
\begin{equation*}
\chi _{\rho }(FSSD_{m}(S^{^{\prime }}(C_{n}))=\left\{ 
\begin{array}{ll}
3; & \text{if $n$ is even}, \\ 
5; & \text{if $n$ is odd}.%
\end{array}%
\right.
\end{equation*}%
\label{cycle'}
\end{proposition}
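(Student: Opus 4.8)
The plan is to separate the even and the odd case.

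When $n$ is even, I would first note that $S'(C_n)=C_n\star K_1$ is bipartite: placing $u_i$ and $v_i$ in class $A$ if $i$ is odd and in class $B$ if $i$ is even gives a proper $2$-colouring, since $v_i$ is adjacent only to $u_{i-1}$ and $u_{i+1}$, both of the parity opposite to that of $u_i$. As $|V(S'(C_n))|=2n\geq 6$, Proposition~\ref{bipartite} then yields $\chi_\rho(FSSD_m(S'(C_n)))=3$.

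Now let $n$ be odd. For $\chi_\rho(FSSD_m(S'(C_n)))\leq 5$ I would exhibit a $5$-packing coloring directly: colour every subdivided vertex with $1$ (they form an independent set, each being adjacent only to two original vertices), put $c(u_n)=4$ and $c(v_n)=5$, and colour the remaining original vertices $u_1,\dots,u_{n-1},v_1,\dots,v_{n-1}$ with colours $2$ and $3$ using a proper $2$-colouring of $S'(C_n)-\{u_n,v_n\}$ — this subgraph is bipartite because deleting $u_n$ turns the odd cycle into a path and each surviving $v_i$ then attaches only to vertices of one parity class of that path. Since any two distinct original vertices lie at distance equal to twice their $S'(C_n)$-distance in $FSSD_m(S'(C_n))$, two originals sharing a colour from $\{2,3\}$ are non-adjacent in $S'(C_n)$ and hence at distance $\geq 4$ (so $>3$); the colour-$4$ and colour-$5$ classes are the singletons $\{u_n\}$ and $\{v_n\}$. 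So this is a valid $5$-packing coloring. (Alternatively, one may first check $\chi_\rho(S'(C_n))=4$ and invoke Proposition~\ref{prop1}.)

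For the matching lower bound, suppose $c$ is a $4$-packing coloring of $FSSD_m(S'(C_n))$. Each $u_i$ has $4m\geq 4$ neighbours, pairwise at distance $2$, so $c(u_i)\neq 1$ (else these neighbours would need $4m$ distinct colours from $\{2,3,4\}$, with at most one vertex per colour). Also $c$ cannot use only colours $2,3$ on $u_1,\dots,u_n$: consecutive $u_i$'s are at distance $2$ and so cannot share colour $2$ or $3$, which would make $c|_{\{u_1,\dots,u_n\}}$ a proper $2$-colouring of the odd cycle $u_1\cdots u_n$. Hence some $u_j$ has $c(u_j)=4$. If $m\geq 2$, the same degree argument applied to $v_i$ (which has $2m\geq 4$ neighbours, pairwise at distance $2$) gives $c(v_i)\neq 1$ for all $i$; thus every original vertex gets a colour from $\{2,3,4\}$, the colour-$2$ and colour-$3$ classes among originals are independent sets of $S'(C_n)$, and the colour-$4$ class among originals is a $2$-packing $P$ of $S'(C_n)$ (two originals coloured $4$ must be at distance $>4$ in $FSSD_m(S'(C_n))$, i.e. at distance $\geq 3$ in $S'(C_n)$), so that $S'(C_n)-P$ would be bipartite. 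But for every $2$-packing $P$ of $S'(C_n)$ with $n$ odd the graph $S'(C_n)-P$ still contains an odd cycle: start from $u_1u_2\cdots u_n$ and, whenever $u_i\in P$, reroute through $v_i$ (replacing $u_{i-1}u_iu_{i+1}$ by $u_{i-1}v_iu_{i+1}$); since $P$ is a $2$-packing, no two consecutive $u_i$'s lie in $P$, we have $v_i\notin P$ whenever $u_i\in P$ (because $d_{S'(C_n)}(u_i,v_i)=2$), and distinct elements $u_i,u_k$ of $P$ are at cyclic distance $\geq 3$ on the cycle, so the reroutings use pairwise disjoint vertex sets and produce an odd $n$-cycle avoiding $P$ — a contradiction. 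This settles $m\geq 2$.

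The case $m=1$ is the delicate one: now $v_i$ has degree $2$, so it may be coloured $1$ and the argument above no longer applies. Here I would argue locally around a vertex $u_j$ with $c(u_j)=4$. The four subdivided neighbours of $u_j$ avoid colour $4$ and contain at most one $2$ and at most one $3$, so at least two of them are coloured $1$. Next, $v_j$ is at distance $4$ from $u_j$ (as $d_{S'(C_n)}(u_j,v_j)=2$) and at distance $2$ from $u_{j-1}$ and $u_{j+1}$, so $c(v_j)\neq 4$ and $c(v_j)$ avoids every colour from $\{2,3\}$ used on $u_{j-1},u_{j+1}$; when $\{c(u_{j-1}),c(u_{j+1})\}=\{2,3\}$ this forces $c(v_j)=1$, and then the two subdivided neighbours of $v_j$ are pushed onto the colours $2$ and $3$, one of which collides, at distance exactly $3$, with the colour $2$ or $3$ already present on a nearby $u$-vertex. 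The remaining configurations around a $4$ (two equal colours on $u_{j-1},u_{j+1}$, or another $u$-vertex coloured $4$ nearby) are dispatched by a short case analysis on the colour pattern $\{2,3,4\}$ along the $u$-cycle, in the style of Lemma~\ref{lemma_color1}. Carrying out this local analysis uniformly for all odd $n\geq 3$ is the main obstacle; once it is done we get $\chi_\rho(FSSD_m(S'(C_n)))\geq 5$, and together with the upper bound this gives $\chi_\rho(FSSD_m(S'(C_n)))=5$ for odd $n$.
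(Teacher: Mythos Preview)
Your even case (via bipartiteness of $S'(C_n)$ and Proposition~\ref{bipartite}) and your upper bound for odd $n$ are correct and essentially match the paper. Your rerouting argument for $m\geq 2$ is a genuinely different and elegant route: the paper never looks at the $v_i$'s in the lower bound, whereas you use them to build an odd cycle in $S'(C_n)-P$.

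The real gap is the $m=1$ lower bound, which you explicitly leave open as ``the main obstacle.'' The paper closes it uniformly for all $m\geq 1$ with a one-line parity trick that makes the case split unnecessary. You already have the two ingredients: $c(u_i)\in\{2,3,4\}$ for every $i$, and your local analysis at a vertex $u_j$ with $c(u_j)=4$ rules out $\{c(u_{j-1}),c(u_{j+1})\}=\{2,3\}$. Since consecutive $u$'s cannot share colour $4$ (distance $2$) and cannot be coloured $1$, this forces $c(u_{j-1})=c(u_{j+1})\in\{2,3\}$; there is no further ``$4$ nearby'' case. Now simply \emph{replace} each colour-$4$ entry $u_j$ by the colour in $\{2,3\}$ not used on its two $u$-neighbours. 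After replacement any two consecutive $u_i,u_{i+1}$ carry distinct colours from $\{2,3\}$, so you have produced a proper $2$-colouring of the odd cycle $u_1\cdots u_n$ --- a contradiction (equivalently, $\chi_\rho(FSSD_m(C_n))\leq 3$ contradicting Proposition~\ref{cycle}). No further local case analysis is needed, and the argument never touches the $v_i$'s, so it works for $m=1$ exactly as for $m\geq 2$. (If you prefer to salvage your rerouting approach instead: when $u_i\in P$ one has $c(u_{i-1})=c(u_{i+1})$, and then the same computation you did for the $\{2,3\}$ case shows that $c(v_i)=1$ would force both subdivided neighbours of $v_i$ onto the same colour at distance $2$; hence $c(v_i)\in\{2,3\}$ and the rerouted odd cycle is genuinely $\{2,3\}$-coloured even when $m=1$.)
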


%%%%%%%%%%%%%%%%%%%%%%%%%%%%%%%%%%%%%%%%%%%%%%%%%%%%%%%%%%%%%%%%%%%%%%%%

\begin{proof} 

\textbf{Case 1.} $n$ is even.\\ 
Since $FSSD_{m}(S^{\prime }(C_{n}))$ contains a subgraph, which is isomorphic to $FSSD_m(C_{n})$, by Propositions \ref{subgraph} and \ref{cycle} follows that $\chi_\rho(FSSD_{m}(S^{\prime }(C_{n})) \geq 3$ for any $m \geq 1$ and $n \geq 3$. 

In order to prove that $\chi_\rho(FSSD_{m}(S^{^{\prime }}(C_{n})) \leq 3$ holds for any $m\geq 1$ and any even $n \geq 3$, we form a $3$-packing coloring of considered graph. First, color all vertices $u_i$, $i \in \{1, \ldots, n \}$, one after another using the following pattern of colors: $2, 3, 2, 3, \ldots, 2, 3$. 
Then, color each vertex $v_i$, $1 \leq i \leq n$, with the color, which is assigned to $u_i$ (for example, see Fig. \ref{fig:4}). The remaining vertices are colored with color $1$. Clearly, this is a $3$-packing coloring of considered graph and therefore $\chi_\rho(FSSD_{m}(S^{^{\prime }}(C_{n})) = 3$ for any $m \geq 1$ and any even $n \geq 3$.

\textbf{Case 2.} $n$ is odd. \\
A graph $FSSD_{m}(S^{^{\prime }}(C_{n}))$ contains a subgraph, which is isomorphic to $FSSD_m(C_{n})$, and analogically as above by Propositions \ref{subgraph} and \ref{cycle} follows that $\chi_\rho(FSSD_{m}(S^{\prime }(C_{n}))) \geq 4$ for any $m \geq 1$ and any odd $n \geq 3$. 

Next, suppose that $\chi_\rho(FSSD_{m}(S^{^{\prime }}(C_{n}))) = 4$ and let $c$ be any $4$-packing coloring of considered graph.
If $c(u_i)=1$ for some $i \in \{1, \ldots, n\}$, then all neighbors of $u_i$ get pairwise different colors. Since $deg(u_i) \geq 4$, $c$ uses at least $5$ colors, a contradiction. Therefore, $c(u_i) \in \{2, 3, 4\}$ for all $i \in \{1, \ldots, n\}$. 
If there exists $i \in \{1, \ldots, n\}$, such that $c(u_i)=4$ and $c(u_{i-1})=2$, $c(u_{i+1})=3$ (resp., $c(u_{i-1})=3$, $c(u_{i+1})=2$),  then there is no available color for $v_i$ or its neighbor.
Therefore, $c(u_i)=4$ implies that either $c(u_{i-1})=2$ and $c(u_{i+1})=2$ or $c(u_{i-1})=3$ and $c(u_{i+1})=3$. But then, by replacing color $4$ with color $3$ (for each $u_i$, when $c(u_i)=4$ and $c(u_{i-1})=c(u_{i+1})=2$) or color $2$ (for each $u_i$, when $c(u_i)=4$ and $c(u_{i-1})=c(u_{i+1})=3$), we infer that $\chi_\rho(FSSD_m(C_{n})) \leq 3$, a contradiction to Proposition \ref{cycle}. Thus, $\chi_\rho(FSSD_{m}(S^{^{\prime }}(C_{n})) \geq 5$.

In order to show that $\chi_\rho(FSSD_{m}(S^{^{\prime }}(C_{n}))) \leq 5$, we form a $5$-packing coloring of considered graph. First, color all vertices $u_i$, $1 \leq i \leq n-1$, using the following pattern of colors: $2,3,2,3, \ldots, 2,3$. Then color each vertex $v_i$, $1 \leq i \leq n-1$, with the color assigned to $u_i$, vertex $u_n$ with color $4$ and vertex $v_n$ with color $5$.  The remaining vertices color by $1$. This is a $5$-packing coloring of a given graph and hence $\chi_\rho(FSSD_{m}(S^{^{\prime }}(C_{n})) =5$.
\qed
\end{proof}

Fig. \ref{fig:4} shows a graph $FSSD_m(S^{\prime }(C_4))$ and its packing
coloring, described in the previous proof.

\begin{figure}[h]
\centering
\includegraphics[scale=1]{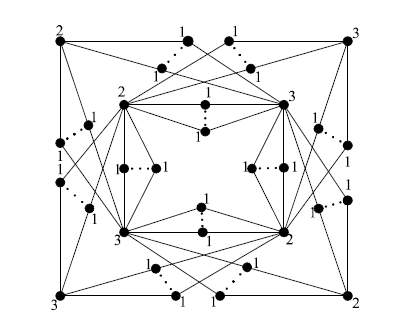} 
\caption{$FSSD_m(S^{\prime }(C_4))$ and a packing coloring of this graph}
\label{fig:4}
\end{figure}
%%%%%%%%%%%%%%%%%%%%%%%%%%%%%%%%%%%%%%%%%%%%%%%%%%%%%%%%%%%%%%%%%
%%%%%%%%%%%%%%%%%%%%%%%%%%%%%%%%%%%%%%% 

\begin{proposition}
If $n\geq 3$ and $m\geq 1$, then 
\begin{equation*}
\chi_\rho(FSSD_{m}(S^{\prime }(K_{n})))=n+2.
\end{equation*}
\end{proposition}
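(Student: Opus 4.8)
The plan is to prove matching upper and lower bounds. For the upper bound I would exhibit an explicit $(n+2)$-packing coloring of $FSSD_m(S'(K_n))$. Recall $S'(K_n)=K_n\star K_1$, so its vertices are $u_1,\dots,u_n$ (the copy of $K_n$), one new vertex $v_i$ attached to $N_{K_n}(u_i)$ for each $i$, and after the super subdivision we have subdivided vertices $u_{i,j}^k$ (between $u_i$ and $u_j$) and $s_{i,j}^k$ (between $u_i$ and $v_j$). I would color every subdivided vertex with $1$; since any two subdivided vertices are at distance at least $2$, this is legitimate. Then I would color the $u_i$ with $n$ distinct colors $3,4,\dots,n+2$, and color every $v_i$ with color $2$. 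It remains to check that the $v_i$ are pairwise at distance $>2$: a shortest $v_i,v_j$-path must pass through some $u_\ell$, so $d(v_i,v_j)\ge 4>2$, and the $u_i$ all get distinct colors so no conflict arises there. Hence $\chi_\rho(FSSD_m(S'(K_n)))\le n+2$.

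For the lower bound, let $c$ be an optimal packing coloring and set $a=\chi_\rho(FSSD_m(S'(K_n)))$; suppose $a\le n+1$ for contradiction. First I would argue, exactly as in the proof of Lemma~\ref{lemma_color1} and the degree argument in Proposition~\ref{prop1}, that $c(u_i)\ne 1$ for every $i$: the vertex $u_i$ has $m(n-1)$ neighbours among the $u_{i,j}^k$ plus $m(n-1)$ neighbours among the $s_{j,i}^k$ (one set for each other vertex of $K_n$), all pairwise at distance $2$, so if $c(u_i)=1$ then $c$ needs at least $1+2m(n-1)\ge 2n-1$ colours, exceeding $a$ when $n\ge 3$. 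So all $u_i$ receive colours from $\{2,\dots,a\}$, and WLOG all vertices $u_{i,j}^k$ receive colour $1$. Next I would show the $v_i$ cannot help: each $v_i$ is adjacent (through the subdivided $s$-vertices) to $u_j$ for all $j\ne i$, and $d(v_i,u_i)=2$ as well (via $u_{i,j}^k$ then $s_{j,i}^k$? — actually $d(v_i,u_i)=4$; I should instead note $d(u_i,u_j)=2$ for all $i\ne j$ so $\{u_1,\dots,u_n\}$ is a clique-like set forcing $n$ distinct colours among them once colour $1$ is excluded). Thus the $u_i$ already consume $n$ pairwise-distinct colours from $\{2,\dots,a\}$, giving $a\ge n+1$; to push to $n+2$ I would show colours $2$ and $3$ cannot both be "reused efficiently" by the $u_i$: the set $\{u_i\}\cup\{v_i\}$ has the property that any two of its members are at distance at most $4$, so each colour $\ge 5$ appears at most once on this set, and then an optimality/counting argument (parallel to the end of the proof of Theorem~\ref{theorem_kn}) shows that if some $u_i$ had colour $2$ or $3$ the other constraints on the $v_j$ cannot be met. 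Concluding $c(u_i)\notin\{1,2,3\}$ for all $i$ forces $a\ge n+3-1=n+2$; combined with the upper bound this gives equality.

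The main obstacle is the lower bound's final step: showing that colours $2$ and $3$ cannot be assigned to any $u_i$. For $K_n\star P_p$ with $p\ge 2$ this was exactly the role of the long case analysis in Lemma~\ref{lemma_color1}, but here with $p=1$ the path-copies degenerate to single vertices $v_i$, which both simplifies matters (fewer vertices to juggle) and changes the distances, so the bound drops from $n+3$ to $n+2$. I would need to verify carefully that a single $v_i$ together with its $m$ subdivided neighbours $s_{j,i}^k$ toward each $u_j$ still creates enough distance-$2$ pressure: if some $u_i$ gets colour $2$, then looking at the vertices $s_{j,i}^k$ and the structure around a second apex $v_\ell$ one derives that colours $2$ and $3$ get exhausted before all of $u_1,\dots,u_n$ are coloured. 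I expect this to be a short case check (distinguishing whether $c(u_i)\in\{2,3\}$ for some $i$ and whether $m=1$ or $m\ge 2$, the latter immediately giving high degree at $v_i$), considerably lighter than the $K_n\star P_p$ case, and I would present it compactly rather than splitting into the five sub-cases used there.
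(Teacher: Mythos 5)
Your upper bound is correct and is exactly the paper's construction (all $v_i$ get colour $2$, the $u_i$ get $3,\dots,n+2$, everything else gets $1$). The lower bound, however, has two genuine problems. First, you never establish that $c(v_i)\neq 1$ for all $i$, and this is not a freebie: without it, nothing in your counting prevents a hypothetical $(n+1)$-colouring from assigning colour $1$ to some or all of the $v_i$, so the $v_i$ exert no ``pressure'' at all and you only get $a\ge n+1$ from the mutually distance-$2$ vertices $u_1,\dots,u_n$. Ruling out $c(v_j)=1$ requires an argument of its own (the paper's: if $c(v_1)=1$ then the $m(n-1)$ vertices $s_{i,1}^k$ need pairwise distinct colours $\neq 1$, and since each $s_{i,1}^k$ is within distance $3$ of every $u_\ell$, only colour $2$ can be shared between $\{s_{i,1}^k\}$ and $\{u_\ell\}$, forcing at least $(n-1)+(n-1)+1$ colours, which exceeds $n+2$ for $n\ge 4$).

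Second, your stated final goal --- showing $c(u_i)\notin\{1,2,3\}$ for every $i$ --- is false and cannot be the right target: it would give $a\ge n+3$, contradicting your own upper bound, and indeed the extremal colouring assigns colour $3$ to one of the $u_i$. What actually closes the argument is a counting step on $A=\{u_i,v_i:1\le i\le n\}$, once both non-$1$ claims are in hand: all pairwise distances in $A$ are at most $4$, so every colour $\ge 4$ occurs at most once on $A$; if $a\le n+1$ then the $n$ vertices $u_i$ must use exactly the colours $2,3,\dots,n+1$; since $d(v_j,u_i)=2$ for $i\neq j$ and $d(v_j,u_j)=4$, colour $2$ (resp.\ $3$) can be reused on at most the single vertex $v_{i_0}$ with $c(u_{i_0})=2$ (resp.\ $v_{i_1}$ with $c(u_{i_1})=3$), and no colour in $\{4,\dots,n+1\}$ is available for any $v_j$. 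Hence at most two of the $n\ge 3$ vertices $v_j$ can be coloured --- a contradiction. Your instinct that ``$2$ and $3$ cannot both be reused efficiently'' is pointing at this, but as written the plan proves the wrong statement and omits the $c(v_i)\neq 1$ step on which the count depends.
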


\begin{proof} 

In order to show that $\chi_\rho(FSSD_{m}(S^{\prime }(K_{n}))) \leq n+2$ holds for any $n \geq 3$ and $m \geq 1$, form a $(n+2)$-packing coloring of $FSSD_m(S^{\prime }(K_{n}))$. First, color all vertices $v_i$, $ 1 \leq i \leq n$, with color $2$, and all vertices $u_i$, $1 \leq i \leq n$, with pairwise different colors from  $\{3, 4, \ldots, n+2\}$. Finally, the remaining vertices of a given graph color with color $1$. Clearly, such coloring is $(n+2)$-packing coloring of $FSSD_{m}(S^{\prime }(K_{n}))$ and thus $\chi_\rho(FSSD_{m}(S^{\prime }(K_{n}))) \leq n+2$ for all $n \geq 3$, $m \geq 1$.

Next, prove that $\chi_\rho(FSSD_{m}(S^{\prime }(K_{n}))) \geq n+2$ holds for any $n \geq 3$ and $m \geq 1$. If $n=3$, then Proposition \ref{cycle'} implies that $\chi_\rho(FSSD_{m}(S^{\prime }(K_{3})))=5$ and we are done. 
Otherwise, let $c$ be any optimal packing coloring of $FSSD_{m}(S^{\prime }(K_{n}))$, where $n \geq 4$. Note that $c$ uses at most $n+2$ colors.
Suppose that there exists $i \in \{1, \ldots, n\}$ such that $c(u_i)=1$. Then, $c(u_{i,j}^k) \neq 1$ and $c(s_{i,j}^k) \neq 1$ for any $j \in \{1, \ldots, n\}$, $i \neq j$, and $k \in \{1, \ldots, m\}$. Since $m \geq 1$, $c$ uses at least $(n-1)+(n-1)$ colors different from $1$, hence $\chi_\rho(FSSD_m(S^{\prime }(K_{n})))\geq 2n-1$. Since $2n-1 > n+2$ for any $n \geq 4$, we have a contradiction to $c$ being an optimal packing coloring of a given graph. Therefore, $c(u_i) \neq 1$ for any $i \in \{1, \ldots, n\}$. 
Then, suppose that there exists $j \in \{1, \ldots, n\}$ such that $c(v_j)=1$. Without loss of generality assume that $c(v_1)=1$, which implies that $c(s_{i,1}^k) \neq 1$ for any $i \in \{2,3, \ldots, n\}$. Recall that also $c(u_i) \neq 1$ for any $i \in \{1, \ldots, n\}$. 
Since the vertices from $\{u_1, u_{i}, s_{i,1}^k; 2 \leq i \leq k\}$ are pairwise at distance at most $3$, the only color which can be used at least twice for packing coloring of these vertices, is color $2$. 
But since any two vertices $s_{i,1}^k$, $i \in \{2, \ldots, n\}$, and also any two vertices $u_i$, $i \in \{1, \ldots, n\}$ are pairwise at distance $2$, $c$ assigns a color $2$ to at most two of the mentioned vertices. Therefore, $c$ uses at least $(n-1)+(n-1)+1$ colors (at least $n-1$ colors for the neighbors of $v_1$, beside that also $n-1$ additional colors for the vertices $u_i$ and color $1$), what is more than $n+2$ for any $n \geq 4$, a contradiction. Hence, $c(v_i) \neq 1$ for any $i \in  \{1, \ldots, n\}$. 
Without loss of generality we may assume that $c$ assigns to all vertices from $V(FSSD_m(S'(K_n))) \setminus \{u_i, v_i; 1 \leq i \leq n \}$ a color $1$.
Note that any two vertices from $A=\{u_i, v_i; i=1, \ldots, n\}$ are at distance at most $4$, which implies $|c^{-1}(i) \cap A| \leq 1$ for all $i \geq 4$.
If $c$ assigns colors $2$ and $3$ to two vertices from $\{u_i; 1, \ldots, n\}$, then at most one vertex from $\{v_i; 1, \ldots, n\}$ receive color $2$ and at most one receive a color $3$. Using Corollary \ref{Cor} and the fact that $n \geq 4$ we infer that $c$ uses at least $n+3$ colors, a contradiction. If $c(u_i) \neq 2$ or $c(u_i) \neq 3$ for all $i \in \{1, \ldots, n\}$, then all vertices $v_i$ (can) receive a color $2$ respectively $3$, which yields that $c$ uses at least $n+2$ colors ($n+1$ color for the vertices $u_1, u_2, \ldots, u_n$ and a color $2$ resp. $3$). This completes the proof.
\qed
\end{proof}

%%%%%%%%%%%%%%%%%%%%%%%%%%%%%%%%%%%%%%%%%%%%%%%%%%%%%%%%%%%%%%%%%%%%%%%%%%%%%%%%%%%%%%%%%%%%%%%%%
%%%%%%%%%%%%%%%%%%%%%%%%%%%%%%%%%%%%%%%%%%%%%%%%%%%%%%%%%%%%%%%
%%%%%%%%%%%%%%%%%%%%%%%%%%%%%%%%%%%%%%%%%%%%%%%%%%%%%%%%%%%%%%

\section{Concluding remarks}

It is well known that some operations or only local changes of a given
graph, change its packing chromatic number. While there are known some fact
about the packing chromatic number of a subdivision of a given graph, in
this paper we consider the operation of finite super subdivisions. Since
there are known some results relating subdivisions of graphs and the concept
of so called $S=(s_1,s_2, \ldots, s_k)$-packing coloring (see e.g. \cite%
{gt-2016}), it would be interesting to consider such coloring and the
influence of the operation of finite super subdivisions on it.

There are some additional open problems about finite super subdivisions of
graphs that follows directly from our work. Namely, as we mentioned, we have
found only two graphs $G$ such that $\chi_\rho(FSSD_1(G))<\chi_%
\rho(FSSD_{2}(G))$. Therefore, an open problem is to determine all graphs $G$
such that $\chi_\rho(FSSD_1(G))<\chi_\rho(FSSD_{2}(G))$. In addition, we
have not found any graph $G$ with the property that $\chi_\rho(FSSD_m(G))<%
\chi_\rho(FSSD_{m+1}(G))$ for any $m \geq 2$, and it would be interesting to
know whether there exists any such graph $G$.

Another natural problem that arises from Theorem \ref{TH_cycle}, is to
determine the exact values of packing chromatic numbers of graphs $%
FSSD_m(C_n \star P_p)$. We propose also a problem of providing the packing
chromatic number of finite super subdivisions of some other classes of
graphs (not necessarily neighborhood corona graphs).

\section*{Acknowledgements}

J.F. acknowledges the financial support from the Slovenian Research Agency
(P1-0403, J1-9109 and J1-1693).

%%%%%%%%%%%%%%%%%%%%%%%%%%%%%%%%%%%%%%%%%%


\begin{thebibliography}{99}
\bibitem{balogh-2018} J.~Balogh, A.~Kostochka, X.~Liu, Packing chromatic
number of subcubic graphs, Discrete Math.\ 341 (2018) 474--483.

\bibitem{balogh-2019} J.~Balogh, A.~Kostochka, X.~Liu, Packing chromatic
number of subdivisions of cubic graphs, Graphs Combin.\ 35 (2019) 513--537.

\bibitem{bar-2017} M.~Barnaby, F.~Raimondi, T.~Chen, J.~Martin, The packing
chromatic number of the infinite square lattice is between $13$ and $15$,
Discrete Appl.\ Math.\ 225 (2017) 136--142.

\bibitem{bf-2018a} B.~Bre\v sar, J.~Ferme, Packing coloring of Sierpiń%
ski-type graphs, Aequationes Math.\ 92 (2018) 1091--1118.

\bibitem{bf-2018b} B.~Bre\v sar, J.~Ferme, An infinite family of subcubic
graphs with unbounded packing chromatic number, Discrete Math.\ 341 (2018)
2337--2342.

\bibitem{bkr-2007} B.~Bre\v sar, S.~Klav\v zar, D.F.~Rall, On the packing
chromatic number of Cartesian products, hexagonal lattice, and trees,
Discrete Appl.\ Math.\ 155 (2007) 2303--2311.

\bibitem{bkrw-2017} B.~Bre\v sar, S.~Klav\v zar, D.F.~Rall, K.~Wash, Packing
chromatic number under local changes in a graph, Discrete Math.\ 340 (2017)
1110--1115.

\bibitem{bkrw-2017b} B.~Bre\v sar, S.~Klav\v zar, D.F.~Rall, K.~Wash,
Packing chromatic number, $(1,1,2,2)$-colorings, and characterizing the
Petersen graph, Aequationes Math.\ 91 (2017) 169--184.

\bibitem{bkrw-2018} B.~Bre\v sar, S.~Klav\v zar, D.F.~Rall, K.~Wash, Packing
chromatic number versus chromatic and clique number, Aequationes Math.\ 92
(2018) 497--513.

\bibitem{ekstein-2014} J.~Ekstein, P.~Holub, O.~Togni, The packing coloring
of distance graphs $D(k,t)$, Discrete Appl.\ Math.\ 167 (2014) 100--106.

\bibitem{fiala-2009} J.~Fiala, S.~Klavžar, B.~Lidick{\'y}, The packing
chromatic number of infinite product graphs, European J.\ Combin.\ 30 (2009)
1101--1113.

\bibitem{FG} J. Fiala and P.A. Golovach. Complexity of the packing chromatic
number for trees. \textit{Discrete Appl. Math}. $158(2010),771-778$.

\bibitem{finbow-2010} A.~Finbow, D.F.~Rall, On the packing chromatic number
of some lattices, Discrete Appl.\ Math.\ 158 (2010) 1224--1228.

\bibitem{gt-2016} N.~Gastineau, O.~Togni, $S$-packing colorings of cubic
graphs, Discrete Math.\ 339 (2016) 2461--2470.

\bibitem{goddard-2008} W.~Goddard, S.M.~Hedetniemi, S.T.~Hedetniemi,
J.M.~Harris, D.F.~Rall, Broadcast chromatic numbers of graphs, Ars Combin.\
86 (2008) 33--49.

\bibitem{gt-2019} N.~Gastineau, O.~Togni, On the packing chromatic number of
subcubic outerplanar graphs, Discrete Appl.\ Math.\ 255 (2019) 209--221.

%%\bibitem{harary-1970} R.~Frucht, F.~Harary, On the corona of two graphs, Aequationes Math.\ 4 (1970) 322--325.

\bibitem{i-2011} I.~Gopalapillai, The spectrum of neighborhood corona of
graphs, Kragujevac J. Math., 35(2011),493--500.

\bibitem{kaladevi} V.~Kaladevi, P.~Backialakshmi, Maximum Distance Matrix of
Super Subdivision of Star Graph. Journal of Computer and Mathematical
Sciences, 2(2011), 780--898.

\bibitem{klmp-2018} M.~Kim, B.~Lidick\' y, T.Masařik, F.~Pfender, Notes on
complexity of packing coloring, Inform.\ Process.\ Lett.\ 137 (2018) 6--10.

\bibitem{kr-2019} S.~Klav\v zar, D.F.~Rall, Packing chromatic
vertex-critical graphs, Discrete Math.\ Theor.\ Comput.\ Sci.\ 21:3 (2019)
paper \#8, 18 pp.

\bibitem{korze-2014} D.~Kor\v ze, A.~Vesel, On the packing chromatic number
of square and hexagonal lattice, Ars Math.\ Contemp.\ 7 (2014) 13--22.

\bibitem{kv-2018} D.~Korže, A.~Vesel, $(d,n)$-packing colorings of infinite
lattices, Discrete Appl.\ Math.\ 237 (2018) 97--108.

\bibitem{kv-2019} D.~Korže, A.~Vesel, Packing coloring of generalized Sierpiń%
ski graphs, Discrete Math.\ Theor.\ Comput.\ Sci.\ 21:3 (2019) paper \#7, 18
pp.

%\bibitem{k-2012} S. Kuntari and T. A. Kusmayadi. The eccentric digraph ofthe corona of $C_{n}$ with $K_{m}$, $C_{m}$ or $P_{m}.$ 
%\textit{J. Indones.
%Math. Soc}., $18(2)(2012),$ $113-118$.

\bibitem{mahe} P.~J.~A.~Maheswari, M.~V.~Laksmi, Vertex equitable labeling
of super subdivision graphs, Scientific International, 27 (2015), 1--3.

\bibitem{nagarajan} A.~Nagarajan, R.~Vasuki, On the meanness of arbitrary
path super subdivision of paths, Australasian J. Combinatorics, 51 (2011),
41--48.

\bibitem{ls-2018} D.~La\"{\i}che, E.~Sopena, Packing colouring of some
classes of cubic graphs, Australas.\ J.\ Combin.\ 72 (2018) 376--404.

\bibitem{WR} A. William and S. Roy. Packing chromatic number of certain
graphs. \textit{Inter. J. Pure. App. Math} . $87(06),(2013)731-739$.

\bibitem{WR2} A. William and S. Roy. Packing chromatic number of certain
trees and cycle related graphs. \textit{International Conference on
Mathematical Computer Engineering} (ICMCE), $2013$.
\end{thebibliography}
\end{document}